
\documentclass[a4paper,12pt]{article}
\usepackage{fancyhdr,graphicx}
\usepackage{amsfonts}
\usepackage{amssymb}
\usepackage{amsthm}
\usepackage{newlfont}
\usepackage{amsmath}
\usepackage[top=2.5cm,bottom=2.5cm,left=2.5cm,right=2.5cm]{geometry}
\pagestyle{empty}


\newtheorem{thm}{Theorem}[section]
\newtheorem{Lemma}[thm]{Lemma}


\usepackage{latexsym,bm}
\setlength{\baselineskip}{17pt}

\title{\vspace{0cm}{Prescribed matchings extend to Hamiltonian cycles in hypercubes with faulty edges\thanks{This
work is supported by NSFC (grant no. 61073046).}}}

\author{Fan Wang, Heping Zhang\footnote{Corresponding author.} \\
\small{School of Mathematics and Statistics, Lanzhou University,
Lanzhou, Gansu 730000, P. R. China}
\\\small{E-mail addresses: wangfan2009@lzu.edu.cn, zhanghp@lzu.edu.cn}}

\date{}

\begin{document}

\maketitle

\thispagestyle{empty}

\begin{abstract}
Ruskey and Savage asked the following question: Does every matching
of $Q_{n}$ for $n\geq2$ extend to a
Hamiltonian cycle of $Q_{n}$? J. Fink showed that the
question is true for every perfect matching, and solved the Kreweras' conjecture. In this paper
we consider the question in
hypercubes with faulty edges. We show that every matching $M$ of at
most $2n-1$ edges can be extended to a Hamiltonian cycle of $Q_{n}$
for $n\geq2$. Moreover, we can prove that when $n\geq4$ and $M$ is
nonempty this result
still holds even if $Q_{n}$ has at most
$n-1-\lceil\frac{|M|}{2}\rceil$ faulty edges with one exception.

\end{abstract}
\textbf{Key words:} Hypercube; Hamiltonian cycle; Matching; Edge
fault tolerance

\section{Introduction}
The $n$-dimensional hypercube $Q_{n}$ is one of the most popular and
efficient interconnection networks. There is a large amount of
literature on graph-theoretic properties of hypercubes as well as on
their applications in parallel computing (e.g., see \cite{8,9}).

It is well known that $Q_{n}$ is Hamiltonian for every $n\geq2$.
This statement dates back to 1872 \cite{10}. Since then, the
research on Hamiltonian cycles in hypercubes satisfying certain
additional properties has received considerable attention. The
applications in parallel computing inspired the study of Hamiltonian
cycles in hypercubes with faulty edges \cite{11,12,6}.
Dvo\v{r}\'{a}k \cite{5} showed that any set of at most $2n-3$ edges
of $Q_{n}(n\geq2)$ that induces vertex-disjoint paths is contained
in a Hamiltonian cycle. Wang et al. \cite{6} proved that this result
still holds even if $Q_{n}$ has some faulty edges, see Lemma
\ref{cycle} below. More details about this topic see \cite{14,3}.

Kreweras \cite{2} conjectured that every perfect matching of $Q_{n}$
for $n\geq2$ can be extended to a Hamiltonian cycle of $Q_{n}$. In
\cite{4, F} Fink solved this conjecture by proving a stronger
result. Ruskey and Savage \cite{15} asked the following question:
Does every matching of $Q_{n}$ for $n\geq2$ extend to a Hamiltonian
cycle of $Q_{n}$? Fink \cite{4} pointed out that the statement is
true for $n=2,3,4$. The result in \cite{5} implied that every
matching of at most $2n-3$ edges can be extended to a Hamiltonian
cycle of $Q_{n}$. Vandenbussche and West \cite{20} showed that every
k-suitable matching of at most $k(n-k)+\frac{(k-1)(k-2)}{2}$ edges
for $1\leq k\leq n-3$ and every induced matching can be extended to
a perfect matching of $Q_{n}$, so can be extended to a Hamiltonian
cycle of $Q_{n}$.

In this paper, we consider Ruskey and Savage's question in faulty
hypercubes and obtain the following main results: every matching $M$
of at most $2n-1$ edges can be extended to a Hamiltonian cycle of
$Q_{n}$ for $n\geq2$;  when $n\geq4$ and $M$ is nonempty this result
still holds even if $Q_{n}$ has at most
$n-1-\lceil\frac{|M|}{2}\rceil$ faulty edges with one exception. The
rest of this paper is organized as follows. In Section 2 we
introduce some necessary definitions and preliminaries. In Section 3
and Section 4 we discuss bases of induction of the two main
theorems. The main results are stated and proved in Section 5.

\section{Definitions and preliminaries}
The terminology and notation used in this paper but undefined below
can be found in \cite{1}. As usual, the vertex set and edge set of a
graph $G$ are denoted by $V(G)$ and $E(G)$. For a set $E\subseteq
E(G)$, let $G-E$ denote the graph with vertices $V(G)$ and edges
$E(G)\setminus E$. The distance between two vertices $u$ and $v$ is
the number of edges in a shortest path between $u$ and $v$ in $G$,
denoted by $d_{G}(u,v)$, with the subscript being omitted when the
context is clear. For any two edges $uv,xy$,
$d(uv,xy)=\min\{d(u,x),d(u,y),d(v,x),d(v,y)\}$. Throughout the
paper, $n$ always denotes a positive integer while $[n]$ denotes the
set $\{1,2,...,n\}$.

The $n$-$dimensional$ $hypercube$ $Q_{n}$ is a graph whose vertex
set consists of all binary strings of length $n$, with two vertices
being adjacent whenever the corresponding strings differ in exactly
one position. An edge in $Q_{n}$ is called an $i$-$dimensional$
$edge$ if its endvertices differ in the $i$th position. The set of
all $i$-dimensional edges of $Q_{n}$ is denoted by $E_{i}$. For any
given $j\in[n]$, let $Q_{n-1}^{0}$ and $Q_{n-1}^{1}$ be two
$(n-1)$-dimensional subcubes of $Q_{n}$ induced by all the vertices
with the $j$th positions being 0 and 1, respectively. Since
$Q_{n}-E_{j}=Q_{n-1}^{0}\cup Q_{n-1}^{1}$, we say that $Q_{n}$ is
decomposed into two $(n-1)$-dimensional subcubes $Q_{n-1}^{0}$ and
$Q_{n-1}^{1}$ by $E_{j}$. Any vertex $u\in V(Q_{n-1}^{0})$ has in
$Q_{n-1}^{1}$ a unique neighbor, denoted by $u_{1}$. Similarly, any
vertex $v\in V(Q_{n-1}^{1})$ has in $Q_{n-1}^{0}$ a unique neighbor,
denoted by $v_{0}$. For any edge $e=uv\in E(Q_{n-1}^{0})$, $e_{1}$
denotes the edge $u_{1}v_{1}\in E(Q_{n-1}^{1})$. For given
$F\subseteq E(Q_{n})$, let $F_{\delta}=F\cap E(Q_{n-1}^{\delta})$
for $\delta\in\{0,1\}$.

\begin{Lemma}\cite{17}\label{decompose} For $n\geq2$, let $e$ and $f$ be two
disjoint edges in $Q_{n}$. Then $Q_{n}$ can be decomposed into two
$(n-1)$-dimensional subcubes such that one contains $e$ and the
other contains $f$.
\end{Lemma}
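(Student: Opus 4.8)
The plan is to recast the lemma as a search for the right splitting dimension. Write $e=uv$ and $f=xy$, and let $a$ be the dimension of $e$ (so $u$ and $v$ differ exactly in position $a$) and $b$ the dimension of $f$. When $Q_n$ is decomposed by $E_j$, an edge is contained in one of $Q_{n-1}^{0},Q_{n-1}^{1}$ precisely when its dimension is different from $j$; in that case the common $j$th coordinate of its two endpoints, $0$ or $1$, decides which subcube holds it. So it suffices to produce a dimension $j\in[n]\setminus\{a,b\}$ at which the common $j$th coordinate of the endpoints of $e$ differs from that of the endpoints of $f$: decomposing by $E_j$ then places $e$ in one subcube and $f$ in the other, as required.

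First I would observe that such a $j$ is exactly a position $j\neq a,b$ at which the vertices $u$ and $x$ disagree. Indeed, since $j\neq a$ the $j$th coordinates of $u$ and $v$ coincide, and since $j\neq b$ those of $x$ and $y$ coincide, so the ``common coordinate'' condition reduces to $u$ and $x$ differing in position $j$. Thus the entire lemma comes down to showing that $u$ and $x$ must disagree in at least one position outside $\{a,b\}$.

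I would establish this by contradiction. Suppose $u$ and $x$ agree in every position outside $\{a,b\}$. Combined with the facts that $u,v$ agree outside $\{a\}$ and $x,y$ agree outside $\{b\}$, this forces all four endpoints $u,v,x,y$ to agree in every position outside $\{a,b\}$; hence they all lie in the single $2$-dimensional subcube obtained by fixing those shared coordinates and letting positions $a$ and $b$ vary. If $a=b$, the two endpoints of each edge are then determined by their (identical) coordinates outside $\{a\}$, forcing $e=f$ and contradicting that $e,f$ are disjoint. If $a\neq b$, then within this $2$-cube, which is a $4$-cycle, $e$ is one of the two $a$-dimensional edges and $f$ one of the two $b$-dimensional edges; but in a $4$-cycle every $a$-edge shares a vertex with every $b$-edge, again contradicting disjointness. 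This contradiction delivers the desired $j$.

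The main obstacle is precisely this existence step: one has to exclude the degenerate configuration in which $e$ and $f$ are squeezed into a common $2$-cube, and the clean device for doing so is the $4$-cycle observation above. The remaining work — translating ``contained in a subcube'' into a statement about coordinates — is routine bookkeeping. The small cases need no separate treatment: for $n=2$ the only disjoint pairs of edges share a dimension, which is exactly the $a=b$ branch handled above.
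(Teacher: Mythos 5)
Your proof is correct. Note that the paper itself gives no proof of this lemma at all --- it is quoted from the cited reference of Tsai and Lai --- so there is no in-paper argument to compare against. Your reduction is the natural one: with $e=uv$ of dimension $a$ and $f=xy$ of dimension $b$, a dimension $j$ works precisely when $j\notin\{a,b\}$ and $u,x$ differ in position $j$, and you correctly verify that no such $j$ exists only if all four endpoints agree outside $\{a,b\}$, which forces either $e=f$ (when $a=b$) or $e$ and $f$ to be incident edges of a common $4$-cycle (when $a\neq b$), contradicting disjointness in both cases. The $n=2$ remark is also accurate. This is a complete, self-contained elementary proof of a statement the paper only cites.
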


Let us recall the following classical result, originally proved by
Havel in \cite{Havel1}.

\begin{Lemma}\cite{Havel1}\label{Havel} Let $n\geq1$ and $x,y\in V(Q_{n})$
be such that $d(x,y)$ is odd. Then there exists a Hamiltonian path
between $x$ and $y$ in $Q_{n}$.
\end{Lemma}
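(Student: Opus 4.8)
The plan is to proceed by induction on $n$. For the base case $n=1$, the cube $Q_1$ is a single edge joining the strings $0$ and $1$; the only pair of vertices at odd distance is $\{0,1\}$ itself, and the edge between them is the required Hamiltonian path. So I assume $n\geq 2$ and that the statement holds for $Q_{n-1}$.

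Given $x,y\in V(Q_n)$ with $d(x,y)$ odd, I would choose a coordinate $j\in[n]$ in which $x$ and $y$ differ (such $j$ exists because $x\neq y$) and decompose $Q_n$ into $Q_{n-1}^{0}$ and $Q_{n-1}^{1}$ by $E_{j}$, so that without loss of generality $x\in V(Q_{n-1}^{0})$ and $y\in V(Q_{n-1}^{1})$. Since $n-1\geq 1$, the vertex $x$ has a neighbor $z$ inside $Q_{n-1}^{0}$; then $d(x,z)=1$ is odd, so by the induction hypothesis there is a Hamiltonian path $P_{0}$ from $x$ to $z$ in $Q_{n-1}^{0}$. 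Crossing the $j$-dimensional edge $zz_{1}$ lands me at $z_{1}\in V(Q_{n-1}^{1})$.

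The key point is a parity check guaranteeing that the second application of the induction hypothesis is legitimate. Colouring each vertex by the parity of its weight, adjacent vertices receive opposite colours, and two vertices lie at odd distance exactly when their colours differ. Since $z$ is adjacent to $x$ and $z_{1}$ is adjacent to $z$, the colour of $z_{1}$ equals that of $x$; as $d(x,y)$ is odd, the colour of $y$ differs from that of $x$, hence from that of $z_{1}$. Therefore $d(z_{1},y)$ is odd, and the induction hypothesis yields a Hamiltonian path $P_{1}$ from $z_{1}$ to $y$ in $Q_{n-1}^{1}$. Concatenating $P_{0}$, the edge $zz_{1}$, and $P_{1}$ then produces a Hamiltonian path from $x$ to $y$ in $Q_{n}$, because $P_{0}$ and $P_{1}$ together cover every vertex of $Q_{n}$ exactly once.

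This argument is essentially mechanical; the only real content lies in the parity bookkeeping of the third paragraph, which I expect to be the single place requiring care, since a wrong parity would leave the inductive hypothesis inapplicable to $P_{1}$. An alternative route, useful when one prefers to keep $x$ and $y$ in a common subcube, is to decompose along a coordinate where $x$ and $y$ agree (available whenever $d(x,y)<n$), build a Hamiltonian path between them in that subcube by induction, and then expand one of its edges $ab$ into a detour $a,a_{1},\dots,b_{1},b$ through a Hamiltonian path of $Q_{n-1}^{1}$ joining $a_{1}$ and $b_{1}$; the single-case version above has the mild advantage of avoiding a split on whether $d(x,y)=n$.
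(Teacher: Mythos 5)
Your proof is correct. Note that the paper does not prove this lemma at all: it is Havel's classical result, cited from the literature, so there is no in-paper argument to compare against. Your induction is the standard one, and the only delicate point --- that $z_{1}$ lies two steps from $x$ and hence has the same weight parity, so $d(z_{1},y)$ inherits oddness from $d(x,y)$ --- is handled correctly; together with the observation that the distance between two vertices of $Q_{n-1}^{1}$ computed inside the subcube equals their distance in $Q_{n}$, this makes both applications of the induction hypothesis legitimate, and the concatenation $P_{0},\,zz_{1},\,P_{1}$ is indeed a Hamiltonian path of $Q_{n}$ from $x$ to $y$.
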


\begin{Lemma}\cite{5}\label{pac} For $n\geq2$, let $x,y\in V(Q_{n})$
and $e\in E(Q_{n})$ such that $d(x,y)$ is odd and $e\neq xy$. Then
there is a Hamiltonian path of $Q_{n}$ between $x$ and $y$ passing
through edge $e$.
\end{Lemma}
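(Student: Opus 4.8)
The plan is to prove Lemma \ref{pac} by induction on $n$, decomposing $Q_n$ along a carefully chosen dimension and assembling the desired Hamiltonian path from sub-paths of the two $(n-1)$-subcubes joined by bridge ($j$-dimensional) edges, using Lemma \ref{Havel} in one subcube and the inductive hypothesis in the other. For the base case $n=2$ note that $Q_2$ is a $4$-cycle and $d(x,y)$ odd forces $x,y$ to be adjacent; the unique Hamiltonian path between adjacent vertices of $C_4$ consists of the three edges other than $xy$, and since $e\ne xy$ it lies on that path. For the inductive step, fix $n\ge 3$ and let $e\in E_i$.

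First I would treat the case in which $x$ and $y$ differ in some coordinate $j\ne i$. Decompose $Q_n$ by $E_j$, so that $x\in Q_{n-1}^0$ and $y\in Q_{n-1}^1$ while $e$ lies entirely in one subcube, say $Q_{n-1}^0$ (the other case is symmetric). I would then pick $w\in V(Q_{n-1}^0)$ and seek a Hamiltonian path of $Q_{n-1}^0$ from $x$ to $w$ through $e$ (inductive hypothesis in $Q_{n-1}^0$) together with a Hamiltonian path of $Q_{n-1}^1$ from $w_1$ to $y$ (Lemma \ref{Havel}), concatenated by the bridge $ww_1$. A short parity computation shows that both $d(x,w)$ and $d(w_1,y)$ are odd precisely when $w$ has the opposite parity to $x$; since for $n\ge 3$ there are at least two such vertices $w$, I can also arrange $xw\ne e$ so that the inductive hypothesis applies.

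The remaining case is $d(x,y)=1$ with $xy\in E_i$, so that $x,y$ differ only in coordinate $i$ and $e=uu_1\ne xy$. Decomposing by $E_i$ puts $x,y$ in different subcubes and makes $e$ a bridge edge with $u\in Q_{n-1}^0$ (the subcube of $x$). If $d(x,u)$ is odd, I would simply use $e$ itself as the bridge, joining a Hamiltonian path of $Q_{n-1}^0$ from $x$ to $u$ to one of $Q_{n-1}^1$ from $u_1$ to $y$, both supplied by Lemma \ref{Havel}. I expect the genuine obstacle to be the parity-obstructed subcase $d(x,u)$ even, where the single-bridge construction fails: here $x$ and $u$ agree in coordinate $i$ yet differ in an even (hence at least two) number of the remaining coordinates, so there is a dimension $j\ne i$ with $u_j\ne x_j$. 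Decomposing by such $E_j$ instead places $x,y$ together in one subcube and $e$ in the other. I would take a Hamiltonian path $R$ of $x,y$'s subcube joining $x$ to $y$ (Lemma \ref{Havel}, since $d(x,y)=1$), delete from $R$ one edge $ab$ that is not an $i$-edge so as to split it into two spanning sub-paths $x\cdots a$ and $b\cdots y$, and then route through the opposite subcube by a Hamiltonian path from $a_1$ to $b_1$ passing through $e$, obtained from the inductive hypothesis; this is legitimate because $d(a_1,b_1)=d(a,b)=1$ is odd and $a_1b_1\ne e$ (the former is not an $i$-edge while $e$ is). Bridging along $aa_1$ and $bb_1$ then yields the required Hamiltonian path $x\cdots a,\,a_1\cdots b_1,\,b\cdots y$ from $x$ to $y$ through $e$. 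The bulk of the care in writing this up goes into the parity bookkeeping and into verifying that the edge chosen for deletion (and the vertex $w$ in the first case) can always be selected to keep every invocation of the inductive hypothesis valid.
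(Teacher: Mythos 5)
The paper offers no proof of Lemma \ref{pac} at all: it is imported verbatim from Dvo\v{r}\'{a}k \cite{5} and used as a black box, so there is no internal argument to compare yours against. On its own merits, your induction is correct and self-contained, resting only on Havel's theorem (Lemma \ref{Havel}). The case split is the right one, and the two delicate points are handled properly: in the first case the parity identity $d(x,w)+d(w_1,y)\equiv d(x,y)-1\pmod 2$ shows both distances are odd exactly when $w$ is on the opposite side of the bipartition from $x$, and the $2^{n-2}\geq 2$ such vertices let you dodge $xw=e$; in the obstructed subcase ($xy\in E_i$, $d(x,u)$ even) the observation that $e\neq xy$ forces $u\neq x$, hence $d(x,u)\geq 2$ and a splitting dimension $j\neq i$ exists, is precisely the key step, and comparing dimensions of $a_1b_1$ and $e$ legitimizes the inductive call. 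The one assertion you should justify with a sentence is that the path $R$ contains an edge that is not $i$-dimensional: the $i$-edges of the subcube form a perfect matching, so no two consecutive edges of $R$ are $i$-edges, and $R$ has $2^{n-1}-1\geq 3$ edges, so such an edge exists. As a comparison of approaches: Dvo\v{r}\'{a}k establishes this lemma as part of the machinery leading to the much stronger Lemma \ref{forest} (any linear forest of at most $2n-3$ edges extends to a Hamiltonian cycle), whereas your argument buys a short elementary proof of the single-prescribed-edge statement that does not generalize to several prescribed edges; for the role this lemma plays in the present paper, that is entirely adequate.
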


\begin{Lemma}\cite{7}\label{pad} For $n\geq3$, let $u,v\in V(Q_{n})$ and
$F\subseteq E(Q_{n})$ such that $d(u,v)$ is odd and $|F|\leq1$. Then
there exists a Hamiltonian path in $Q_{n}-F$ between $u$ and $v$.
\end{Lemma}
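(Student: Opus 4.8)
The plan is to reduce immediately to the only nontrivial situation and then split $Q_n$ cleverly. If $F=\emptyset$ the claim is exactly Havel's theorem (Lemma~\ref{Havel}), so I may assume $F=\{f\}$ with $f\in E_i$ for some $i\in[n]$. The key idea is to decompose $Q_n$ along the \emph{faulty} dimension: split $Q_n$ into $Q^0:=Q_{n-1}^{0}$ and $Q^1:=Q_{n-1}^{1}$ by $E_i$. Then $f$ is one of the crossing edges, say $f=xx_1$ with $x\in V(Q^0)$, and both subcubes are fault-free, so inside either of them I am free to invoke Havel's theorem (Lemma~\ref{Havel}) and Lemma~\ref{pac}. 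It remains to reassemble a Hamiltonian path from $u$ to $v$ that uses crossing edges other than $f$. I distinguish two cases according to whether $u$ and $v$ fall in the same subcube.

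First suppose $u$ and $v$ lie in different subcubes, say $u\in V(Q^0)$ and $v\in V(Q^1)$. I would pick an intermediate vertex $w\in V(Q^0)$, route a Hamiltonian path of $Q^0$ from $u$ to $w$, cross to $Q^1$ along $ww_1$, and continue with a Hamiltonian path of $Q^1$ from $w_1$ to $v$. For both halves to exist I need $d(u,w)$ and $d(w_1,v)$ odd; writing everything in terms of Hamming-weight parity, these two requirements are simultaneously satisfiable precisely because $d(u,v)$ is odd, and they force $w$ to have parity opposite to that of $u$. Any such $w$ works, and a short computation shows $w\ne v_0$ is automatic, so I only have to avoid the single vertex $x$ in order to guarantee $ww_1\ne f$. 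Since $Q^0$ contains $2^{\,n-2}$ vertices of the required parity, such a $w$ exists for every $n\ge3$.

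Now suppose $u$ and $v$ lie in the same subcube, say $Q^0$; then $d(u,v)$ is still odd since a shortest path stays inside $Q^0$. Here I would use a detour: by Lemma~\ref{pac} there is a Hamiltonian path of $Q^0$ from $u$ to $v$ through any prescribed edge $ab\ne uv$, and I replace $ab$ by the detour $a\,a_1\cdots b_1\,b$, where $a_1\cdots b_1$ is a Hamiltonian path of $Q^1$ joining $a_1$ and $b_1$ (which exists by Lemma~\ref{Havel}, as $d(a_1,b_1)=1$ is odd). The spliced path is Hamiltonian in $Q_n$ and runs from $u$ to $v$; its only crossing edges are $aa_1$ and $bb_1$, so I must choose $ab$ with $a,b\ne x$ to keep it away from $f$. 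The main obstacle is precisely this bookkeeping: checking that an admissible $w$ (resp.\ edge $ab$) avoiding $f$ always exists. The counting is comfortable for $n\ge4$ but tightest at $n=3$, where each subcube is only a $4$-cycle; there one verifies directly that at least one admissible choice survives, which suffices. This completes the construction and hence the proof.
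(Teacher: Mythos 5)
Your argument is correct, but there is nothing in the paper to compare it with: Lemma~\ref{pad} is imported from the literature (the paper of Dimitrov, Dvo\v{r}\'{a}k, Gregor and \v{S}krekovski, reference \cite{7}) and the authors give no proof of it. What you have written is a legitimate self-contained derivation from the two tools the paper does state, namely Havel's theorem (Lemma~\ref{Havel}) and Lemma~\ref{pac}. The key move --- splitting $Q_n$ along the dimension of the faulty edge so that $f$ becomes a crossing edge and both subcubes are fault-free --- is exactly the right one, and your two cases are sound. In the split case, $d(u,v)$ odd forces $d(u,v_0)$ even, so every $w$ with $d(u,w)$ odd automatically satisfies $w\neq v_0$ and $d(w_1,v)$ odd, and there are $2^{n-2}\geq2$ such $w$, so one of them avoids $x$. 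In the same-subcube case, $Q^0\cong Q_{n-1}$ with $n-1\geq2$, so Lemma~\ref{pac} applies, and the count of admissible edges $ab$ (not equal to $uv$ and not incident with $x$) is at least $(n-1)2^{n-2}-(n-1)-1\geq1$ even at $n=3$. The spliced object is a Hamiltonian path of $Q_n$ from $u$ to $v$ whose only crossing edges are $ww_1$ (resp.\ $aa_1,bb_1$), none equal to $f$. So your proof is complete; the only caveat is that it proves a cited lemma rather than reproducing an argument the paper contains.
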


A path with endvertices $u$ and $v$ is denoted by $P_{uv}$. We say
that paths $\{P_{i}\}_{i=1}^{k}$ are $spanning$ $paths$ of a graph
$G$ if $\{V(P_{i})\}_{i=1}^{k}$ partitions $V(G)$.

\begin{Lemma}\cite{5}\label{twopa} For $n\geq2$, let $x,y,u,v$ be pairwise
distinct vertices of $Q_{n}$ such that both $d(x,y)$ and $d(u,v)$
are odd. Then (i) there exist spanning paths $P_{xy},P_{uv}$ of
$Q_{n}$; (ii) moreover, in the case when $d(x,y)=1$, path $P_{xy}$
can be chosen such that $P_{xy}=xy$, unless $n=3$, $d(u,v)=1$ and
$d(xy,uv)=2$.
\end{Lemma}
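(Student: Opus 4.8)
The plan is to prove both parts simultaneously by induction on $n$, decomposing $Q_n$ into two subcubes $Q_{n-1}^0$ and $Q_{n-1}^1$ along a carefully chosen dimension $j$ and assembling the two spanning paths from pieces living in the subcubes, joined through crossing edges of $E_j$. The base cases $n=2$ and $n=3$ I would settle directly. For $n=2$ the cube is the $4$-cycle and the four pairwise distinct vertices force $d(x,y)=d(u,v)=1$, so the opposite edges $P_{xy}=xy$ and $P_{uv}=uv$ are the desired spanning paths. For $n=3$ a finite check establishes (i) in every configuration and isolates the failure of (ii): when $d(u,v)=1$ and $d(xy,uv)=2$, deleting $x$ and $y$ leaves six vertices inducing a $6$-cycle together with one extra chord joining the antipodal pair $\{u,v\}$, and this graph admits no Hamiltonian path between $u$ and $v$, so $P_{xy}=xy$ is impossible even though (i) still holds. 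The exception is therefore genuine and confined to $n=3$.

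For the inductive step proving (i) with $n\ge 4$, since $d(x,y)$ is odd there is a coordinate in which $x$ and $y$ differ; choosing $j$ to be such a coordinate places $x$ and $y$ in opposite subcubes, say $x\in V(Q_{n-1}^0)$ and $y\in V(Q_{n-1}^1)$. Only two configurations of $\{u,v\}$ then arise. If $u$ and $v$ lie in the same subcube (a $3$--$1$ split), I would apply the induction hypothesis (i) inside the subcube holding three of the vertices to obtain spanning paths, one ending at a freely chosen vertex $b$, and then use Havel's Lemma~\ref{Havel} across the crossing edge $bb_1$ to run a Hamiltonian path of the other subcube ending at the lone vertex. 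If instead $u\in V(Q_{n-1}^0)$ and $v\in V(Q_{n-1}^1)$ (a split $2$--$2$), I would choose connecting vertices $a,b\in V(Q_{n-1}^0)$, apply hypothesis (i) in $Q_{n-1}^0$ to get spanning paths from $x$ to $a$ and from $u$ to $b$, apply it again in $Q_{n-1}^1$ to get spanning paths from $y$ to $a_1$ and from $v$ to $b_1$, and concatenate through $aa_1$ and $bb_1$. The parity bookkeeping is favorable: because $Q_n$ is bipartite and $d(x,y)$ is odd, one has $d(x,a)\equiv d(a,y_0)\pmod 2$, so $a$ (and likewise $b$) can be taken in the color class making every required endpoint pair an odd-distance pair.

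For part (ii) with $d(x,y)=1$, the edge $xy$ is $i$-dimensional for some $i$, and I would split along some $j\ne i$ so that $x$ and $y$ stay together in $Q_{n-1}^0$; keeping $P_{xy}=xy$ then forces $P_{uv}$ to absorb $V(Q_{n-1}^0)\setminus\{x,y\}$ together with all of $V(Q_{n-1}^1)$. The key observation is that demanding $P_{xy}=xy$ is exactly asking for a Hamiltonian path of $Q_n-\{x,y\}$ between $u$ and $v$, so the induction becomes self-referential: the portion lying in $Q_{n-1}^0$ is a Hamiltonian path of $Q_{n-1}^0-\{x,y\}$, furnished by hypothesis (ii) one dimension down, while the portion in $Q_{n-1}^1$ is furnished by Havel's Lemma~\ref{Havel} or, when $u$ and $v$ both lie in $Q_{n-1}^1$, by hypothesis (i) there after the path detours once into $Q_{n-1}^0$. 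When $u$ and $v$ both fall in $Q_{n-1}^0$ the argument reduces instead to hypothesis (ii) in that subcube followed by absorbing $Q_{n-1}^1$ into $P_{uv}$ by expanding one of its edges via Lemma~\ref{Havel}.

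The main obstacle is not the generic construction but the control of degenerate configurations: I must guarantee that the auxiliary endpoints $a,b$ can be chosen pairwise distinct from the prescribed vertices and from one another while meeting all odd-distance constraints, and I must ensure that the reduced configurations handed to the induction hypothesis never themselves fall into the $n=3$ exception—or, when they threaten to, that an alternative splitting dimension is available, which it is for $n\ge 4$. Verifying that such choices always exist, especially in the smallest reduced cases and in the part-(ii) reductions that invoke the exceptional sub-statement one dimension down, is the delicate heart of the argument, whereas the $6$-cycle-plus-chord computation at $n=3$ already explains why the single stated exception cannot be removed.
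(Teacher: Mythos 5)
This lemma is imported by the paper from Dvo\v{r}\'{a}k \cite{5} and stated without proof, so there is no in-paper argument to measure your attempt against; I can only assess the proposal on its own terms. Your strategy --- induction on $n$ via a subcube decomposition chosen so that the prescribed endpoints are distributed conveniently, with Havel's Lemma~\ref{Havel} supplying the Hamiltonian paths inside a subcube and auxiliary crossing vertices $a,b$ stitching the pieces together --- is essentially the method used in \cite{5}, and the key checks you record are correct: the parity bookkeeping $d(x,a)\equiv d(a,y_0)\pmod 2$ is right, and your analysis of the $n=3$ exception is exactly the reason it occurs ($Q_3$ minus the two endpoints of an edge is a $6$-cycle plus a chord joining the antipodal pair, which admits no Hamiltonian path between the ends of that chord). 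The inductive constructions for part (i) go through for $n\geq4$ because part (i) carries no exceptional case and each colour class of $Q_{n-1}$ has at least $4$ vertices, leaving room to pick $a,b$ distinct from the prescribed vertices. The one place your sketch genuinely stops short is the part-(ii) step at $n=4$, where the induction hypothesis applied in a $Q_3$ factor can itself hit the exceptional configuration: you correctly note that either the auxiliary endpoints can be re-chosen (there are several admissible pairs and at most one is the bad antipodal edge) or the splitting dimension can be changed (the exceptional pair $xy,uv$ in $Q_3$ is necessarily parallel, so a coordinate separating $\{x,y\}$ from $\{u,v\}$ exists), but you do not carry out this verification. It does go through, so I would call this a correct outline with the routine-but-unavoidable case analysis deferred rather than a flawed proof.
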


Note that for any edge $e\in E(Q_{3})$ there exists a unique edge
$f\in E(Q_{3})$ such that $d(e,f)=2$.

A forest is $linear$ if each component of it is a path.

\begin{Lemma}\cite{5}\label{forest} For $n\geq2$, let $E\subseteq E(Q_{n})$
with $|E|\leq2n-3$. Then there exists a Hamiltonian cycle of $Q_{n}$
passing through $E$ if and only if the subgraph induced by $E$ is a
linear forest.
\end{Lemma}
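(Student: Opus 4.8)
The necessity is immediate: if a Hamiltonian cycle $C$ of $Q_{n}$ contains $E$, then $E$ is a proper subset of $E(C)$ (proper since $|E|\le 2n-3<2^{n}$ for $n\ge2$), and every proper subgraph of a cycle is a disjoint union of paths, i.e.\ a linear forest. The substance of the lemma is the sufficiency, which I would prove by induction on $n$. The base case $n=2$ is trivial, since there $|E|\le1$ and $Q_{2}$ is itself a $4$-cycle; a few further small values can be verified directly if the case analysis below requires them to start.

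For the inductive step, assume the statement for $Q_{n-1}$ and let $E$ be a linear forest of $Q_{n}$ with $|E|\le 2n-3$. Decompose $Q_{n}=Q_{n-1}^{0}\cup Q_{n-1}^{1}$ along a dimension $j$ to be chosen, and write $E_{0}=E\cap E(Q_{n-1}^{0})$, $E_{1}=E\cap E(Q_{n-1}^{1})$ and $R=E\cap E_{j}$, so that $|E_{0}|+|E_{1}|+|R|=|E|$. The guiding idea is to build the Hamiltonian cycle so that its trace in each subcube is a spanning path (or union of spanning paths) through the corresponding sub-forest, with the subcubes stitched together across dimension $j$ by crossing edges that contain all of $R$. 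The cleanest situation is $|R|=2$, say $R=\{aa_{1},bb_{1}\}$ with $d(a,b)$ odd: it then suffices to produce a Hamiltonian path of $Q_{n-1}^{0}$ between $a$ and $b$ through $E_{0}$ and a Hamiltonian path of $Q_{n-1}^{1}$ between $a_{1}$ and $b_{1}$ through $E_{1}$, and to close them up with $aa_{1}$ and $bb_{1}$. To make this rigorous I would strengthen the induction to a combined cycle-and-path form that also guarantees, for prescribed odd-distance endpoints, a Hamiltonian path through a prescribed linear forest; Lemmas \ref{Havel}, \ref{pac} and \ref{twopa} then cover the degenerate cases in which a sub-forest is empty or a single edge. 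When $|R|\le1$ I would instead apply the (cycle form of the) hypothesis to obtain Hamiltonian cycles $C_{0},C_{1}$ of the two subcubes through $E_{0}$ and $E_{1}$, and merge them along a crossing pair $\{aa_{1},bb_{1}\}$ with $ab\in E(C_{0})\setminus E_{0}$ and $a_{1}b_{1}\in E(C_{1})\setminus E_{1}$, chosen to include the at most one edge of $R$; since $|E(C_{0})|=2^{n-1}$ vastly exceeds $|E_{0}|$ and the two cycles can be realigned, a compatible merge edge can always be found.

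The choice of $j$ must make all of this work at once. A counting argument shows that at least two dimensions carry at most one edge of $E$, and that $j$ can be selected so that neither $|E_{0}|$ nor $|E_{1}|$ exceeds $2n-5$, which is precisely the budget needed to invoke the hypothesis in $Q_{n-1}$: indeed $|E_{0}|+|E_{1}|\le 2n-3-|R|$, so every $j$ with $|R|\ge2$ is automatically safe, while for $|R|\le1$ one selects $j$ among the dimensions that separate the forest into two nonnegligible parts. I expect the main obstacle to lie in the subcase $|R|=2$, where three competing requirements must hold simultaneously: the \emph{size} condition $|E_{\delta}|\le 2n-5$, the \emph{parity} condition that the prescribed endpoints $a,b$ lie at odd distance (so that a spanning path between them exists at all), and the \emph{placement} condition that $R$ consists of two parallel crossing edges $aa_{1},bb_{1}$. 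When $R$ is not parallel, or $d(a,b)$ is even, the two-path scheme collapses, and one must re-choose $j$, split some $E_{\delta}$ into two spanning paths joined by additional crossing edges, or absorb the offending edges into a sub-forest. Reconciling these exceptional configurations, together with dispatching the small base cases by hand, is where the real effort of the proof is concentrated.
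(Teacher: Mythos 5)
This lemma is not proved in the paper at all: it is Dvo\v{r}\'{a}k's theorem, imported verbatim from \cite{5} and used as a black box, so there is no in-paper proof to compare against and your argument has to stand on its own. It does not. The necessity direction is fine, but for sufficiency what you have written is a plan that explicitly defers every step that constitutes the actual proof. Three gaps are concrete. First, your merging argument for $|R|\leq1$ builds Hamiltonian cycles $C_{0}$ and $C_{1}$ of the two subcubes independently and claims a crossing pair $ab\in E(C_{0})\setminus E_{0}$, $a_{1}b_{1}\in E(C_{1})\setminus E_{1}$ exists because $|E(C_{0})|=2^{n-1}$ ``vastly exceeds'' $|E_{0}|$; this counting is against the wrong quantity. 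The obstruction is not $E_{1}$ but $E(Q_{n-1}^{1})\setminus E(C_{1})$, which has $(n-1)2^{n-2}-2^{n-1}$ edges and dwarfs $2^{n-1}$ once $n\geq5$, so generically no edge of $C_{0}$ projects onto an edge of $C_{1}$. The standard repair is to pick $uv\in E(C_{0})\setminus E_{0}$ with $u_{1}v_{1}\notin E_{1}$ first and then demand a Hamiltonian cycle of $Q_{n-1}^{1}$ through $E_{1}\cup\{u_{1}v_{1}\}$ (this is exactly Claim 1 in the proof of Theorem \ref{main1}), but that requires $|E_{1}|+1\leq 2(n-1)-3$, which returns you to the second gap. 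Second, your assertion that $j$ can be chosen with $|R|\leq1$ and both $|E_{0}|,|E_{1}|\leq 2n-5$ is unsupported and fails in general: if $E$ is a linear forest of $2n-3$ edges contained in a single $(n-1)$-subcube, the only dimension with $R=\emptyset$ leaves all of $E$ on one side. Handling $|E_{0}|\in\{2n-4,2n-3\}$ --- by deleting one or two edges of $E_{0}$, applying induction, and rerouting the missing edges through the other subcube via spanning path pairs as in Lemma \ref{twopa} --- is precisely what Cases 1 and 2 of Theorem \ref{main1} in this paper do for the fault-free extension, and it is the bulk of the work; your sketch omits it entirely. Third, the ``combined cycle-and-path form'' you propose to strengthen the induction with is not a technical convenience you can wave at: stated precisely (a Hamiltonian path between prescribed odd-distance endpoints through a prescribed linear forest, under suitable size and incidence constraints) it \emph{is} the theorem of \cite{5}, and proving it involves exactly the parity and placement case analysis you acknowledge but do not carry out. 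As it stands the proposal is a correct road map with the destination missing.
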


\begin{Lemma}\cite{13}\label{cyc} Let $n\geq3$ and $F\subseteq E(Q_{n})$
with $|F|\leq n-2$. Then any edge of $E(Q_{n})\setminus F$ lies on a
Hamiltonian cycle of $Q_{n}-F$.
\end{Lemma}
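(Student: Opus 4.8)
The plan is to induct on $n$, decomposing $Q_n$ into two $(n-1)$-subcubes and merging Hamiltonian cycles supplied by the induction hypothesis. For the base case $n=3$ we have $|F|\le 1$; writing $e=uv$, I will invoke Lemma~\ref{pad} with the odd pair $u,v$ to get a Hamiltonian path of $Q_3-F$ between $u$ and $v$. Since a Hamiltonian path on the $8$ vertices of $Q_3$ cannot use the edge joining its two endpoints, this path avoids $e$, and adjoining $e$ (which lies in $Q_3-F$ because $e\notin F$) closes it into the desired Hamiltonian cycle through $e$.

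For the inductive step I will take $n\ge4$, $|F|\le n-2$, and set $k=\dim(e)$, then split on whether the faults sit in $e$'s own dimension. In the main case some dimension $j\ne k$ carries a fault, i.e. $f_j:=|F\cap E_j|\ge1$. I will decompose $Q_n$ along $E_j$ into $Q_{n-1}^0,Q_{n-1}^1$; as $e\in E_k$ with $k\ne j$, $e$ lies inside one part, say $Q_{n-1}^0$. The point of choosing $f_j\ge1$ is that then $|F_0|+|F_1|=|F|-f_j\le n-3$, so \emph{each} part inherits at most $(n-1)-2$ faults and the induction hypothesis applies to both. I will first produce a Hamiltonian cycle $C_0$ of $Q_{n-1}^0-F_0$ through $e$, then select an edge $ab\in C_0$ with $ab\ne e$, with crossing edges $aa_1,bb_1\notin F$, and with image $a_1b_1\notin F_1$; such an edge exists because the number of forbidden edges of $C_0$ is at most $1+|F_1|+2f_j\le 2n-3<2^{n-1}=|E(C_0)|$. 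Applying the induction hypothesis in $Q_{n-1}^1$ with prescribed edge $a_1b_1$ yields a Hamiltonian cycle $C_1$ of $Q_{n-1}^1-F_1$ through $a_1b_1$. Merging $C_0$ and $C_1$ by deleting $ab$ and $a_1b_1$ and inserting the fault-free crossing edges $aa_1,bb_1$ then gives a Hamiltonian cycle of $Q_n-F$ still containing $e$.

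It remains to treat the degenerate case in which every fault lies in dimension $k$ (in particular $F=\varnothing$). Here I will decompose $Q_n$ along $E_k$ itself, so that both subcubes are fault-free while $e=uu_1$ becomes a crossing edge. I will pick $w\in V(Q_{n-1}^0)$ with $d(u,w)$ odd and $ww_1\notin F$, which is possible since the $2^{n-2}$ vertices at odd distance from $u$ outnumber the at most $n-2$ forbidden choices. Lemma~\ref{Havel} then furnishes a Hamiltonian path of $Q_{n-1}^0$ from $u$ to $w$ and a Hamiltonian path of $Q_{n-1}^1$ from $u_1$ to $w_1$, and joining these via the crossing edges $e=uu_1$ and $ww_1$ closes a Hamiltonian cycle of $Q_n-F$ through $e$.

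I expect the main obstacle to be the fault-balancing in the inductive step: the induction needs each subcube to carry at most $(n-1)-2$ faults, which a naive split along a fault-free dimension cannot guarantee, and this is exactly what forces the choice of a splitting dimension $j$ that itself contains a fault. Securing that choice while also exhibiting a single merge edge $ab$ whose two crossing edges and whose image are simultaneously non-faulty is the delicate part, handled by the counting estimate above; and the configuration in which every fault is parallel to $e$ escapes this scheme entirely and must be isolated and dispatched by the fault-free two-path construction.
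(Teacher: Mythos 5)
The paper does not actually prove this lemma: it is quoted from Li, Tsai, Tan and Hsu \cite{13}, so there is no in-paper proof to compare against. Your argument is a correct, self-contained derivation from the other quoted results. The base case is sound: a Hamiltonian path of $Q_{3}-F$ between the endpoints of $e$ (supplied by Lemma~\ref{pad}, since $d(u,v)=1$ is odd and $|F|\le1$) cannot contain $e$ itself, because the two endpoints of a path on eight vertices are not consecutive on it, so closing the path with the non-faulty edge $e$ gives the cycle. In the inductive step your key idea --- splitting along a dimension $j\ne\dim(e)$ that carries a fault, so that the two halves together inherit at most $|F|-f_{j}\le n-3=(n-1)-2$ faults and the hypothesis applies to both --- is the standard fault-concentration trick, and the count $1+|F_{1}|+2f_{j}\le 1+|F|+f_{j}\le 2n-3<2^{n-1}$ for the existence of a good merge edge $ab$ is correct (each crossing fault forbids only the two cycle edges at its endpoint in $Q_{n-1}^{0}$, and $a_{1}b_{1}\notin F_{1}$ lets the hypothesis prescribe it in the other half). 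The residual case $F\subseteq E_{\dim(e)}$ is also handled correctly: both halves are then fault-free, $2^{n-2}>n-2$ leaves a valid second crossing pair $ww_{1}$ with $d(u,w)$ odd, and Lemma~\ref{Havel} closes the cycle through $e=uu_{1}$. I see no gap.
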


\begin{Lemma}\cite{6}\label{cycle} For $n\geq2$, let $F\subseteq E(Q_{n})$,
$E\subseteq E(Q_{n})\setminus F$ with $1\leq |E|\leq2n-3$, $|F|\leq
n-2-\lfloor\frac{|E|}{2}\rfloor$. If the subgraph induced by $E$ is
a linear forest, then all edges of $E$ lie on a Hamiltonian cycle of
$Q_{n}-F$.
\end{Lemma}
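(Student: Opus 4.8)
The plan is to argue by induction on $n$. For the base cases $n=2$ and $n=3$ the hypothesis $|F|\le n-2-\lfloor |E|/2\rfloor$ is very restrictive: when $|E|=1$ it forces $|F|\le n-2$, so the claim is exactly Lemma~\ref{cyc}; when $|E|\ge 2$ it forces $F=\emptyset$, so the claim is exactly Lemma~\ref{forest}. Thus the base of the induction is already supplied by the quoted results, and it remains to treat the inductive step for $n\ge 4$.

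For the inductive step I would decompose $Q_n=Q_{n-1}^{0}\cup Q_{n-1}^{1}$ along a carefully chosen dimension $j$, writing $E_\delta=E\cap E(Q_{n-1}^{\delta})$, $F_\delta=F\cap E(Q_{n-1}^{\delta})$, and letting $E_j$, $F_j$ denote the $j$-dimensional (crossing) edges of $E$ and $F$. Since the linear forest $E$ has at most $2n-3<2n$ edges, averaging over the $n$ dimensions produces a dimension carrying at most one edge of $E$; I would refine this choice (using Lemma~\ref{decompose} to split two far-apart edges of $E$ when necessary) so that simultaneously $|E_j|\le 1$, each $E_\delta$ is a linear forest with $|E_\delta|\le 2(n-1)-3=2n-5$, and $F$ splits with $|F_\delta|\le (n-1)-2-\lfloor |E_\delta|/2\rfloor$ in both subcubes. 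With such a $j$ the induction hypothesis applies inside each $Q_{n-1}^{\delta}$, while degenerate sides with $E_\delta=\emptyset$ are handled by Lemma~\ref{cyc} or ordinary fault-tolerant Hamiltonicity.

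I would then build the cycle by a standard edge-exchange merge. In $Q_{n-1}^{0}-F_0$ take, by induction, a Hamiltonian cycle $C_0$ through $E_0$; in $Q_{n-1}^{1}-F_1$ I want a Hamiltonian path through $E_1$ joining the mirror vertices $x_1=(x_0)_1$, $y_1=(y_0)_1$ of an edge $x_0y_0\in C_0\setminus E_0$. For a suitable such edge the two crossing edges $x_0x_1$, $y_0y_1$ must avoid $F_j$, so deleting $x_0y_0$ and gluing in the path via these crossing edges yields a Hamiltonian cycle of $Q_n-F$ through all of $E$; the path in $Q_{n-1}^{1}$ is produced by the induction hypothesis, or by Lemmas~\ref{pad}, \ref{pac} and \ref{twopa} when $E_1$ and $F_1$ are small. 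If $E_j=\{u_0u_1\}$, I would instead force $u_0u_1$ to be one of the two crossing edges of the merge, selecting the second crossing edge among the non-faulty mirror edges incident to $u_0$. Because there are $2^{n-1}$ crossing edges while $|F_j|$ and $|E_0|$ are tiny relative to $2^{n-1}$, a counting argument guarantees an admissible exchange edge exists.

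The main obstacle is the dimension selection in the second paragraph: one must find a single $j$ that \emph{at once} keeps at most one crossing $E$-edge, balances $E$ so that neither side exceeds $2n-5$ while preserving the linear-forest property, and distributes $F$ so the tightened per-subcube bound $n-3-\lfloor |E_\delta|/2\rfloor$ holds on both sides, all while leaving a fault-free crossing pair compatible with the prescribed crossing edge. I expect this to require a case analysis on $|E|$, on $|F|$, and on how the edges of $E$ are spread among the dimensions, the tightest cases being $|E|$ near $2n-3$ (where balance is forced) and the configurations in which $E_j$ and $F_j$ compete for the same crossing edges during the merge. Care must also be taken never to delete an edge of $E_0$ or $E_1$ when performing the exchange.
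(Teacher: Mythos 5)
The paper does not prove Lemma~\ref{cycle} at all --- it is imported verbatim from reference [6] --- so there is no internal proof to compare against, and your proposal has to stand on its own. As a plan it follows the strategy that [6] and this paper's Theorem~\ref{main} actually use (induction, splitting along a dimension meeting $E\cup F$ in at most one edge, edge-exchange merge), and your base cases and the observation that $|F|\geq1$ forces $|E|\leq2n-5$ (so the size bound $|E_{\delta}|\leq2(n-1)-3$ is never the issue) are fine. But the step you yourself flag as ``the main obstacle'' is where the entire proof lives, and the property you assert there is false: there need not exist any admissible $j$ for which $F$ splits so that $|F_{\delta}|\leq(n-1)-2-\lfloor|E_{\delta}|/2\rfloor$ holds on both sides. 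Concretely, in $Q_{5}$ take $E$ to be the path $00000,10000,11000,11100$ (edges in dimensions $1,2,3$) and $F=\{\{00000,01000\},\{01000,11000\}\}$ (dimensions $2$ and $1$), so $|E|=3$, $|F|=2=n-2-\lfloor|E|/2\rfloor$. Every edge of $E\cup F$ lies in the subcube $x_{3}=x_{4}=x_{5}=0$ except for the one dimension-$3$ edge of $E$, so the only admissible splitting dimensions are $j\in\{3,4,5\}$, and for each of them the subcube $x_{j}=0$ receives all of $F$ together with at least two edges of $E$, giving $|F_{0}|=2$ against an inductive budget of $(n-1)-2-\lfloor|E_{0}|/2\rfloor=1$. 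So the induction hypothesis cannot be invoked in $Q_{n-1}^{0}$ for any choice of $j$, and your argument stalls exactly where you predicted a ``case analysis'' would be needed.

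The missing idea is the repair that this paper itself deploys in the analogous tight subcases of Theorem~\ref{main} (Subcases 1.3 and 3.1.1): when the fault budget in a subcube is exceeded by one, set aside a fault edge $f\in F_{0}$, apply induction in $Q_{n-1}^{0}-(F_{0}\setminus\{f\})$, and if $f$ ends up on the resulting cycle $C_{0}$, use $f$ itself as the exchange edge $uv$, so that it is deleted during the merge and the final cycle still avoids $F$. Without this (or an equivalent device) the induction does not close. A secondary, smaller issue: your merge requires a Hamiltonian cycle of $Q_{n-1}^{1}-F_{1}$ through $E_{1}\cup\{u_{1}v_{1}\}$, i.e.\ the induction hypothesis with one \emph{extra} prescribed edge, which costs $|F_{1}|\leq(n-1)-2-\lfloor(|E_{1}|+1)/2\rfloor$ --- strictly tighter than the bound you target whenever $|E_{1}|$ is odd --- and you must also check that $E_{1}\cup\{u_{1}v_{1}\}$ remains a linear forest. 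These are exactly the off-by-one computations that occupy most of the proof of Theorem~\ref{main}, and they are absent here; as it stands the proposal is a reasonable outline rather than a proof.
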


\section{Base of induction of the first main theorem}

A set of edges in a graph $G$ is called a $matching$ if no two edges
have a point in common. A matching is $perfect$ if it covers all of
$V(G)$.

\begin{Lemma}\cite{4,F}\label{Fink} For every perfect matching $M$ of $K(Q_{n})$
there exists a perfect matching $R$ of $Q_{n}$, $n\geq2$, such that
$M\cup R$ is a Hamiltonian cycle of $K(Q_{n})$, where $K(Q_{n})$ is
the complete graph on the vertices of the hypercube $Q_{n}$.
\end{Lemma}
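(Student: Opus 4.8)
The plan is to argue by induction on $n$. For the base case $n=2$ the graph $K(Q_2)$ is just $K_4$, which has exactly three perfect matchings, and I would verify each directly: when $M$ is a pair of parallel cube edges, take $R$ to be the complementary parallel pair; when $M$ is the pair of diagonals, take either parallel pair of cube edges. In every case $M\cup R$ is a Hamiltonian $4$-cycle, so the base holds.

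For the inductive step, assume the statement for $n-1\ge 2$ and let $M$ be a perfect matching of $K(Q_n)$. Since $M$ is nonempty, some edge of $M$ has endpoints differing in some coordinate $d$; I would split $Q_n$ along this dimension into $Q_{n-1}^{0}$ and $Q_{n-1}^{1}$, which guarantees that $M$ has at least one \emph{crossing} edge joining the two subcubes. Write the crossing edges as $x_1y_1,\dots,x_{2k}y_{2k}$ with $x_i\in V(Q_{n-1}^{0})$ and $y_i\in V(Q_{n-1}^{1})$; a parity count on either subcube shows their number is even. The remaining edges of $M$ lie entirely within one of the two subcubes.

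Next I would manufacture two perfect matchings to feed into the induction hypothesis. On $K(Q_{n-1}^{0})$ take the edges of $M$ internal to $Q_{n-1}^{0}$ together with the auxiliary edges $x_1x_2,\,x_3x_4,\dots,x_{2k-1}x_{2k}$; this covers each vertex of $Q_{n-1}^{0}$ exactly once and so is a perfect matching $M_0$, and pairing the $y_i$ in the same order gives a perfect matching $M_1$ of $K(Q_{n-1}^{1})$. By the induction hypothesis there are perfect matchings $R_0,R_1$ of the subcubes with $H_0=M_0\cup R_0$ and $H_1=M_1\cup R_1$ Hamiltonian cycles. I would then delete the auxiliary edges $x_{2i-1}x_{2i}$ and $y_{2i-1}y_{2i}$ from $H_0$ and $H_1$ and insert the genuine crossing edges $x_iy_i$. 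Every vertex keeps degree two, so the result is a $2$-regular spanning subgraph of $K(Q_n)$ whose edge set is exactly $M$ together with the cube edges $R_0\cup R_1$.

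The main obstacle is that this spanning $2$-factor need not be connected: deleting $k$ edges from each Hamiltonian cycle and reconnecting through the crossing edges realizes some permutation of the resulting arcs, and only a single-cycle permutation yields a Hamiltonian cycle outright. When $k=1$ connectivity is automatic, since the two cube-paths are spliced into one cycle by the two crossing edges; the difficulty is concentrated in the case of several crossing pairs, and in the extreme parallel matching $M=E_d$, for which no splitting dimension produces exactly one crossing pair. I would resolve the general case by the standard merge: whenever the $2$-factor has more than one cycle, delete an edge $ab$ of $R_0$ from one cycle and the edge $a_1b_1$ of $R_1$ (where $a_1,b_1$ are the $d$-neighbors of $a,b$) from another, and insert the two $d$-dimensional cube edges $aa_1,bb_1$; this keeps $R$ a perfect matching of $Q_n$ and strictly decreases the number of cycles. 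Dispatching $M=E_d$ by its explicit Hamiltonian cycle and then exploiting the freedom in the pairings and in the cube edges supplied by induction to make such a merging configuration always available is exactly where I expect the real work to lie.
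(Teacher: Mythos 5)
The paper does not prove this lemma at all---it is quoted from Fink's papers [4, F]---so the only question is whether your argument stands on its own. It does not: there is a genuine gap at the connectivity step, and you have correctly located it yourself. Your plan applies the induction hypothesis to the two subcubes \emph{independently}, with an arbitrary pairing $x_1x_2,\dots$ of the crossing endpoints and the ``same order'' pairing of the $y_i$; the resulting $2$-factor decomposes into a number of cycles governed by how the Hamiltonian cycles $H_0,H_1$ happen to thread the $x_i$'s and $y_i$'s, and you have no control over that. The proposed repair---delete $ab\in R_0$ from one cycle and $a_1b_1\in R_1$ from another and insert $aa_1,bb_1$---requires a cube edge $ab$ with $ab\in R_0$, $a_1b_1\in R_1$, and the two edges on different cycles. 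None of these conditions is guaranteed: $R_0$ and the translate of $R_1$ are two perfect matchings of $Q_{n-1}$ and may well be disjoint, so the merging move can be unavailable from the start. Treating $M=E_d$ separately does not help, since the obstruction arises for generic $M$ with $k\geq2$ crossing pairs.

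The fix in Fink's actual proof is to run the two applications of the induction hypothesis \emph{sequentially} rather than in parallel. First apply induction in $K(Q_{n-1}^{0})$ to $M_0$ plus an arbitrary pairing of the $x_i$, obtaining $H_0$; deleting the auxiliary edges leaves $k$ paths whose endpoint pairs define a perfect matching $P$ on $\{x_1,\dots,x_{2k}\}$. Now take the auxiliary matching on the $y_i$ to be the copy of $P$ (pair $y_i$ with $y_j$ exactly when some path of $H_0$ runs from $x_i$ to $x_j$), apply induction in $K(Q_{n-1}^{1})$ to get a single Hamiltonian cycle $H_1$, and replace each auxiliary edge $y_iy_j$ of $H_1$ by the walk $y_i x_i\cdots x_j y_j$ through the corresponding path and the two crossing edges of $M$. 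Since one replaces edges of a single cycle by internally disjoint paths, connectivity is automatic, and the edge set is exactly $M\cup R_0\cup R_1$ with $R_0\cup R_1$ a perfect matching of $Q_n$. Your setup (splitting, parity count, auxiliary matchings) is exactly right; what is missing is this reordering of the induction, without which the ``real work'' you defer is not merely tedious but unresolved.
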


\begin{Lemma}\label{3path} Let $M$ be a matching of $Q_{3}$
and $u,v\in V(Q_{3})$ such that $u\notin V(M)$ and
$d(u,v)$ is odd. Then there exists a Hamiltonian path of $Q_{3}$
between $u$ and $v$ passing through $M$.
\end{Lemma}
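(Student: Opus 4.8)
The plan is to fix $Q_3$ and reduce to a bounded number of configurations that are settled by the path lemmas already available. Since a Hamiltonian path of $Q_3$ covers all $8$ vertices and $u\notin V(M)$, the matching $M$ covers at most $6$ vertices, so $|M|\le 3$. When $|M|=0$ the claim is exactly Lemma \ref{Havel}, and when $|M|=1$, writing $M=\{e\}$, we have $e\neq uv$ because $u\notin V(M)$, so Lemma \ref{pac} finishes the case. It therefore remains to treat $|M|\in\{2,3\}$. Using the automorphism group of $Q_3$ (vertex-transitive, with the stabilizer of $u$ permuting the three coordinates) I would normalize $u=000$; since $d(u,v)$ is odd, only two situations survive up to symmetry, namely $d(u,v)=1$ (say $v=001$) and $d(u,v)=3$ (so $v=111$).

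For $d(u,v)=1$ and $|M|\le 2$ there is a clean shortcut. The edge $uv$ is disjoint from every edge of $M$ except possibly at $v$, and since $u\notin V(M)$ no cycle is created, so $M\cup\{uv\}$ is a linear forest with at most $3=2\cdot 3-3$ edges. Lemma \ref{forest} then yields a Hamiltonian cycle $C$ of $Q_3$ through $M\cup\{uv\}$, and $C-uv$ is the required Hamiltonian path from $u$ to $v$ through $M$ (note $uv\notin M$). This disposes of every $d(u,v)=1$ case except $|M|=3$.

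The remaining cases---$d(u,v)=3$ with $|M|\in\{2,3\}$, and $d(u,v)=1$ with $|M|=3$---I would handle by decomposing $Q_3=Q_2^0\cup Q_2^1$ along a carefully chosen dimension $j$, writing $M_0,M_1$ for the matching edges inside the two subcubes and $M_c=M\cap E_j$ for the crossing ones. Since $\sum_{d=1}^{3}|M\cap E_d|=|M|\le 3$, some dimension meets $M$ in at most one edge; respecting the constraint that $j$ differ from the dimension of $uv$ in the $d(u,v)=1$ case, one checks that a suitable $j$ with $|M_c|\le 1$ always exists. Each subcube is a $4$-cycle, in which a Hamiltonian path between two adjacent vertices is forced and uses all edges but the connecting one; because $u\notin V(M)$ and because a matching of $C_4$ meets any vertex at most once, one can pick the crossing vertex so that the induced Hamiltonian paths of $Q_2^0$ and $Q_2^1$ automatically absorb $M_0$ and $M_1$. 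Splicing these across a single crossing edge produces the desired path whenever $M_c=\emptyset$.

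The main obstacle is the case $|M_c|=1$, which occurs precisely when $M$ already uses all three dimensions and hence cannot be pushed out of the splitting dimension: here the single-excursion path above uses only one crossing edge and may miss the crossing matching edge. I would resolve this by rerouting through a three-crossing pattern that forces the crossing matching edge to be traversed---partitioning each $C_4$ into two subpaths and alternating between the subcubes---and by using the freedom in the choice of $j$ and of the partition endpoints to reduce to a short explicit list of configurations, each verified directly with the help of Lemma \ref{twopa}. The bookkeeping is routine once $u\notin V(M)$ is exploited to keep every forced edge off the vertex $u$; the only genuinely delicate point is ensuring, in the tight count $|M|=3$, that the unique uncovered partner of $u$ can be placed consistently with the prescribed crossing edge.
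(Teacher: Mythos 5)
Your reductions are sound as far as they go: $|M|\le 3$ follows from $u\notin V(M)$; the cases $|M|=0,1$ are indeed Lemmas \ref{Havel} and \ref{pac}; and the shortcut for $d(u,v)=1$ with $|M|\le 2$ is correct and rather elegant, since $u\notin V(M)$ guarantees $M\cup\{uv\}$ is a linear forest of at most $3=2\cdot3-3$ edges, so Lemma \ref{forest} applies and deleting $uv$ from the resulting cycle gives the path. The single-crossing construction also genuinely settles $d(u,v)=3$ whenever a splitting dimension $j$ with $M\cap E_j=\emptyset$ exists (in particular all of $|M|=2$, $d(u,v)=3$): the forced Hamiltonian path of the $4$-cycle $Q_2^0$ from $u$ to a neighbour $w$ omits only $uw\notin M$, and of the two choices of $w$ at most one makes $w_1v\in M_1$.

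The gap is that the cases which remain after these reductions --- essentially $|M|=3$ with $M$ meeting all three dimension classes, and $|M|=3$, $d(u,v)=1$ with $M$ meeting the dimension of $uv$ --- are never actually proved. For these you offer only a plan (``rerouting through a three-crossing pattern \dots reduce to a short explicit list of configurations, each verified directly''), and that unexhibited list is precisely the substance of the lemma: the statement is a finite assertion about $Q_3$, and whoever writes the proof must at some point display the verifications. The paper does exactly this, but with a cleaner reduction in the opposite direction: it first \emph{enlarges} any $M$ with $|M|\le 2$ to a matching $M'$ of size $3$ still avoiding $u$ (so that a path through $M'$ is a fortiori a path through $M$), reducing to nine configurations of $(M,u)$ up to isomorphism, and then exhibits the required Hamiltonian paths explicitly in Figures 1--2. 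Your approach trades a larger general-lemma superstructure for a shorter residual check, which is a reasonable trade, but until the residual configurations are listed and the corresponding paths (or applications of Lemma \ref{twopa}) are written down, the proof is incomplete.
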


\begin{proof} Since $u\notin V(M)$, we have $|M|\leq3$,
there are nine possibilities for
$M$ and $u$ up to isomorphism, see Figure 1. Observe that when
$|M|\leq2$ we can extend $M$ to a matching $M^{'}$ of size 3 which
satisfying $u\notin V(M^{'})$. If the conclusion holds for $M^{'}$,
then it also holds for $M$, so we can assume that $|M|=3$. Then
there is a Hamiltonian path of $Q_{3}$ between $u$ and $v$ passing
through $M$, see Figure 2.
\end{proof}

\begin{figure}[h]
\begin{center}
\includegraphics[scale=0.7]{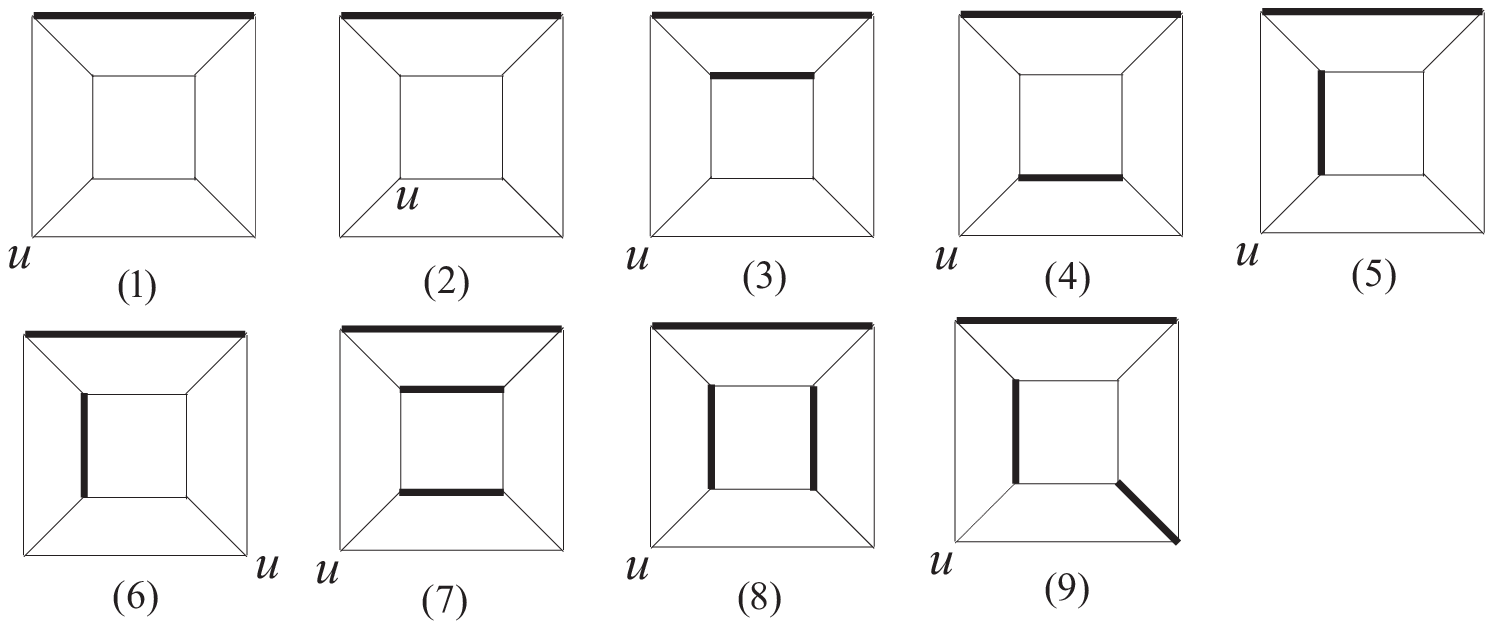}\\

{Fig. 1. Nine possibilities for $M$ and $u$ of Lemma \ref{3path}
with the edges of $M$ highlighted.}
\end{center}
\end{figure}

\begin{figure}[h]
\begin{center}
\includegraphics[scale=0.7]{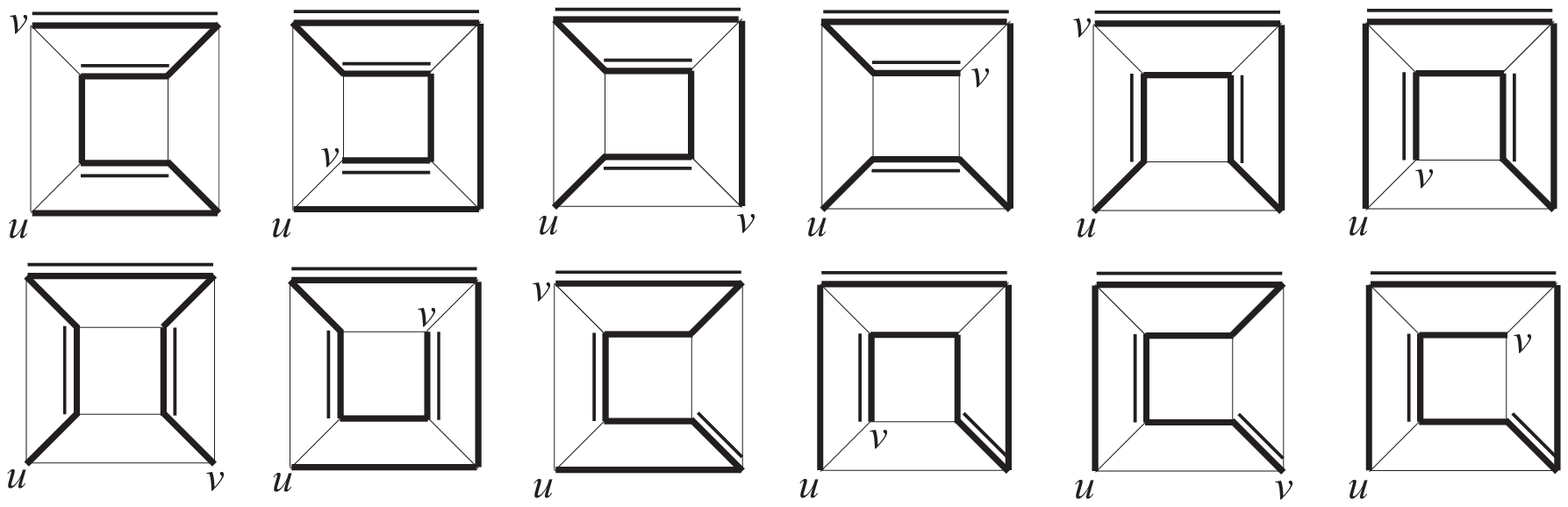}\\

{Fig. 2. Hamiltonian paths in Lemma \ref{3path} between $u$ and $v$
passing through $M$.}
\end{center}
\end{figure}

The following lemma is the base of induction of Theorem \ref{main1}.

\begin{Lemma}\label{cy} Every matching of $Q_{n}$ can be extended to
a Hamiltonian cycle of $Q_{n}$ for $n=2,3,4$.
\end{Lemma}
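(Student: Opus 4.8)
The plan is to prove Lemma \ref{cy} by a case analysis on $n\in\{2,3,4\}$, handling the small cases $n=2,3$ directly and reducing $n=4$ to the already-established machinery. For $n=2$, the cube $Q_2$ is a $4$-cycle, so any matching (which has at most two edges) already lies on the unique Hamiltonian cycle; this is immediate. For $n=3$, a matching $M$ of $Q_3$ has $|M|\le 4$. When $|M|\le 3=2n-3$ with $n=3$, Lemma \ref{forest} applies directly, since a matching is in particular a linear forest, giving a Hamiltonian cycle through $M$. The only genuinely new situation at $n=3$ is a perfect matching $|M|=4$, which is covered by Fink's result (Lemma \ref{Fink}): $M$ extends to a Hamiltonian cycle. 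So $n=3$ splits cleanly into the small-matching case via Lemma \ref{forest} and the perfect-matching case via Lemma \ref{Fink}.

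For $n=4$, the bound $2n-3=5$ from Lemma \ref{forest} covers every matching with $|M|\le 5$ immediately, and a perfect matching ($|M|=8$) is again handled by Lemma \ref{Fink}. The remaining cases are $|M|\in\{6,7\}$, and these are where the real work lies. My plan is to use the decomposition of $Q_4$ into two copies of $Q_3$ by some dimension $E_j$, chosen so that the induced matchings $M_0$ and $M_1$ in the two subcubes are balanced and small. Since $|M|\le 7$, by an averaging/pigeonhole argument over the four possible splitting dimensions I expect to find a $j$ such that the number of edges of $M$ inside each subcube is controlled (ideally so that each $M_\delta$ is a non-perfect matching of $Q_3$ and leaves suitable free vertices). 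The cross edges of $M$ (those in $E_j$) then dictate where the two sub-Hamiltonian-paths must be joined.

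The mechanism for the gluing is Lemma \ref{3path}: in each subcube $Q_{n-1}^\delta\cong Q_3$, I want to produce a Hamiltonian path through the matching $M_\delta$ whose endpoints are chosen to line up with the cross edges of $M$, so that concatenating the two paths through two $j$-dimensional edges yields a Hamiltonian cycle of $Q_4$ containing all of $M$. Concretely, if $M$ has a cross edge $u u_1$ with $u\in Q_{n-1}^0$, I would take $u$ as an endpoint of the path in $Q_{n-1}^0$ and $u_1$ as an endpoint of the path in $Q_{n-1}^1$, using Lemma \ref{3path} with the partner vertex at odd distance so that the required free-endpoint hypothesis ($u\notin V(M_\delta)$) is met by routing $u u_1$ as a ``used'' matching edge on the cycle rather than inside a subcube. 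The parity condition (odd distance) needed for Lemma \ref{3path} should be arranged by the structure of the hypercube, since antipodal and suitably separated vertices across the split have the correct parity.

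The main obstacle I anticipate is the case $|M|=7$ when the cross edges and the in-subcube edges cannot simultaneously be made to satisfy the endpoint-freeness hypothesis of Lemma \ref{3path}; in such configurations one or both subcubes may be forced to carry a near-perfect matching leaving too few free vertices to serve as path endpoints. To handle this I would either re-choose the splitting dimension $j$ (exploiting that different dimensions distribute the seven edges differently, so at least one choice avoids the bad configuration), or, when no cross edge is available to seed the gluing, fall back on Lemma \ref{Havel} or Lemma \ref{pac} to build the two spanning paths and join them through a pair of parallel $j$-dimensional edges not conflicting with $M$. The delicate bookkeeping lies in verifying, across the finitely many isomorphism types of these dense matchings in $Q_4$, that some admissible splitting always exists; since $Q_4$ is small this reduces to a finite check, which I would organize by the number of cross edges under the chosen $j$.
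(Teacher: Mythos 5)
Your treatment of $n=2$, $n=3$, and the easy parts of $n=4$ ($|M|\le 5$ via Lemma \ref{forest}, $|M|=8$ via Lemma \ref{Fink}) matches the paper, and your two-path gluing via Lemma \ref{3path} does work whenever both induced matchings $M_0$, $M_1$ are non-perfect: each $M_\delta$ with $|M_\delta|\le 3$ leaves a free vertex in each colour class of $Q_3$, so endpoints with the required parity and the required freeness can always be lined up across the two subcubes. But there is a genuine gap in the remaining case, and your two proposed fallbacks do not close it. If $M\cap E_j=\emptyset$ and $|M_0|=4$, then $M_0$ is a perfect matching of $Q_3^0$, no vertex of $Q_3^0$ is free, and Lemma \ref{3path} is simply inapplicable on that side; Lemmas \ref{Havel} and \ref{pac} cannot substitute, since neither routes a whole matching (let alone a perfect one) onto the path. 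Re-choosing $j$ does not always help either: take $M$ to be seven of the eight $i$-dimensional edges of $Q_4$. Splitting by $E_i$ is useless ($|M\cap E_i|=7$), and for every $j\neq i$ the single missing edge lies in one subcube, so the other subcube inherits all four of its $i$-dimensional edges, i.e.\ a perfect matching of $Q_3$. So the ``bad'' configuration is unavoidable, not a bookkeeping artifact, and your plan has no tool to handle it.

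The missing idea, which is exactly how the paper proceeds, is to treat the two subcubes asymmetrically: label them so that $M_1$ is not perfect (possible since $|M_0|+|M_1|\le 7$), obtain a Hamiltonian \emph{cycle} $C_0$ of $Q_3^0$ containing $M_0$ from the already-established $n=3$ case of the same lemma (which covers perfect $M_0$ via Fink), and only then open $C_0$ at an edge $uv\in E(C_0)\setminus M_0$ chosen so that $u_1\notin V(M_1)$ -- such an edge exists at any $M_1$-free vertex's projection, since at most one $C_0$-edge at that vertex lies in $M_0$. Lemma \ref{3path} is then applied only in $Q_3^1$, where its hypothesis holds. With that replacement for your symmetric two-path scheme, the rest of your argument goes through.
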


\begin{proof} Let $M$ be a matching of $Q_{n}$.
By Lemma \ref{Fink}, every perfect matching can be extended a
Hamiltonian cycle of $Q_{n}$ for $n\geq2$, so we only need to
consider the case that $M$ is not perfect. If $n=2,3$, then
$|M|\leq2n-3$ and therefore by Lemma \ref{forest}, there is a
Hamiltonian cycle of $Q_{n}$ containing $M$. If $n=4$, since
$|M|\leq7$, there exists $j\in[4]$ such that $|M\cap E_{j}|\leq1$.
Decompose $Q_{4}$ into $Q_{3}^{0}$ and $Q_{3}^{1}$ by $E_{j}$ such
that $M_{1}$ is not perfect. First find a Hamiltonian cycle $C_{0}$
of $Q_{3}^{0}$ containing $M_{0}$. If $M\cap E_{j}=\{uu_{1}\}$, then
select a neighbor $v$ of $u$ on $C_{0}$. Since $M$ is a matching,
$uv\notin M$. If $M\cap E_{j}=\emptyset$, then let $uv\in
E(C_{0})\setminus M_{0}$ such that $u_{1}\notin V(M_{1})$. Since
$u_{1}\notin V(M_{1})$ and $d(u_{1},v_{1})$ is odd, by Lemma
\ref{3path} there is a Hamiltonian path $P_{u_{1}v_{1}}$ of
$Q_{3}^{1}$ passing through $M_{1}$. Then the desired Hamiltonian
cycle of $Q_{4}$ is induced by edges of $(E(C_{0})\cup
E(P_{u_{1}v_{1}})\cup\{uu_{1},vv_{1}\})\setminus\{uv\}$.
\end{proof}

\section{Base of induction of the second main theorem}

\begin{Lemma}\label{iso} Let $F=\{f\}\subseteq E(Q_{3})$ and
$M$ be a matching of $Q_{3}-F$ with $|M|=2$. Then there exists a
Hamiltonian cycle containing $M$ in $Q_{3}-F$ except that
$(Q_{3},M,F)$ is the case $(b)$ or $(c)$ on Figure 3 up to
isomorphism.
\end{Lemma}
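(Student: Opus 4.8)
The plan is to reduce the statement to a finite check resting on a single structural fact: the faulty cube $Q_3 - f$ has exactly two Hamiltonian cycles, so a matching $M$ of size $2$ in $Q_3 - f$ extends to a Hamiltonian cycle of $Q_3 - f$ if and only if $M$ is contained in one of these two cycles. Since $Q_3$ is edge-transitive, I would fix $f = \{000,001\}$ once and for all; every choice of $f$ is equivalent to this one under an automorphism of $Q_3$, so no generality is lost.

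First I would pin down the two Hamiltonian cycles. In $Q_3 - f$ the endvertices $000$ and $001$ of $f$ have degree $2$, so every Hamiltonian cycle must use both remaining edges at each of them; this forces the two paths $010$-$000$-$100$ and $011$-$001$-$101$. The only vertices not yet covered are $110$ and $111$, and a short case analysis on how they are attached shows that either the edge $\{110,111\}$ is used, giving exactly the two completions $C_1$ and $C_2$, or it is not, in which case the forced edges close up into two disjoint $4$-cycles rather than a Hamiltonian cycle. (Equivalently, one may invoke that $Q_3$ has six Hamiltonian cycles, each edge lying in four of them, so each edge is avoided by precisely two.) I would then record $E(C_1)$ and $E(C_2)$ explicitly, noting the five edges common to both (the four edges forced at $000$ and $001$ together with $\{110,111\}$) and the two three-element symmetric differences $A = E(C_2)\setminus E(C_1)$ and $B = E(C_1)\setminus E(C_2)$.

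With this in hand, the extension fails exactly when $M$ meets both $A$ and $B$. Indeed, counting shows $E(Q_3-f)\setminus E(C_1) = A$ and $E(Q_3-f)\setminus E(C_2) = B$, so $M\subseteq E(C_1)$ is equivalent to $M\cap A=\emptyset$ and $M\subseteq E(C_2)$ to $M\cap B=\emptyset$. Since $A$ and $B$ are disjoint and $|M|=2$, a failing $M$ must consist of one edge of $A$ and one edge of $B$; imposing that these two edges be vertex-disjoint leaves exactly three matchings. I would then check that the automorphism of $Q_3$ swapping the first two coordinates fixes $f$ and identifies two of these three matchings, so that up to isomorphism there are precisely two failing configurations; these are the cases $(b)$ and $(c)$ of Figure 3. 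For every other $M$ the containment $M\subseteq E(C_1)$ or $M\subseteq E(C_2)$ holds and the corresponding cycle is the one wanted, while for the two exceptional configurations no Hamiltonian cycle of $Q_3 - f$ can contain $M$ because $C_1$ and $C_2$ are the only candidates.

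The routine part is exhibiting $C_1$ and $C_2$ and listing $A$ and $B$; the step needing genuine care is the exhaustive enumeration of failing matchings together with the verification that the coordinate-swap symmetry collapses them to exactly the two advertised exceptions. An over- or undercount there, or a missed symmetry, is the only real pitfall.
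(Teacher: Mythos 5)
Your proof is correct, and it takes a genuinely different route from the paper's. The paper's proof is pure enumeration: it lists the three non-isomorphic matchings of size $2$ in $Q_{3}$ and, for each one, exhausts the possible positions of the faulty edge $f$, exhibiting an explicit Hamiltonian cycle in every non-exceptional configuration (this is what Figure 4 records). You instead normalize $f$ by edge-transitivity and prove the structural fact that $Q_{3}-f$ has exactly two Hamiltonian cycles $C_{1},C_{2}$ (forced by the degree-$2$ endvertices of $f$, or by the count that each of the six Hamiltonian cycles of $Q_{3}$ misses an edge with multiplicity $6\cdot8/12=4$), so that $M$ extends if and only if $M$ avoids one of the two $3$-edge sets $A=E(Q_{3}-f)\setminus E(C_{1})$ and $B=E(Q_{3}-f)\setminus E(C_{2})$. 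Your numerical claims all check out: the five common edges are the four forced at the endvertices of $f$ together with the edge antipodal to $f$; $A$ and $B$ alternate around the $6$-cycle on the remaining six vertices; the disjointness requirement cuts the nine $A$--$B$ pairs down to the three pairs of opposite edges of that $6$-cycle; and the stabilizer of $f$ (of order $4$) has exactly two orbits on these three matchings, distinguishable intrinsically since in one exception both edges of $M$ are parallel to $f$ and in the other neither is. Your approach buys a conceptual explanation of \emph{why} exactly two exceptions occur and an airtight certificate of non-extendability (only two candidate cycles exist and both are excluded), at the cost of more setup; the paper's approach is shorter to state but rests entirely on checking figures.
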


\begin{figure}[h]
\begin{center}
\includegraphics[scale=0.6]{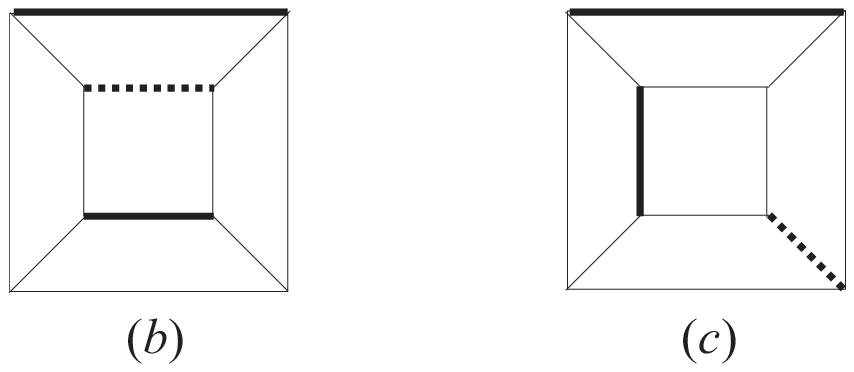}\\

{Fig. 3. Two counterexamples of Lemma \ref{iso} with the edges of
$M$ highlighted and the edge of $F$ dotted.}
\end{center}
\end{figure}

\begin{figure}[h]
\begin{center}
\includegraphics[scale=0.7]{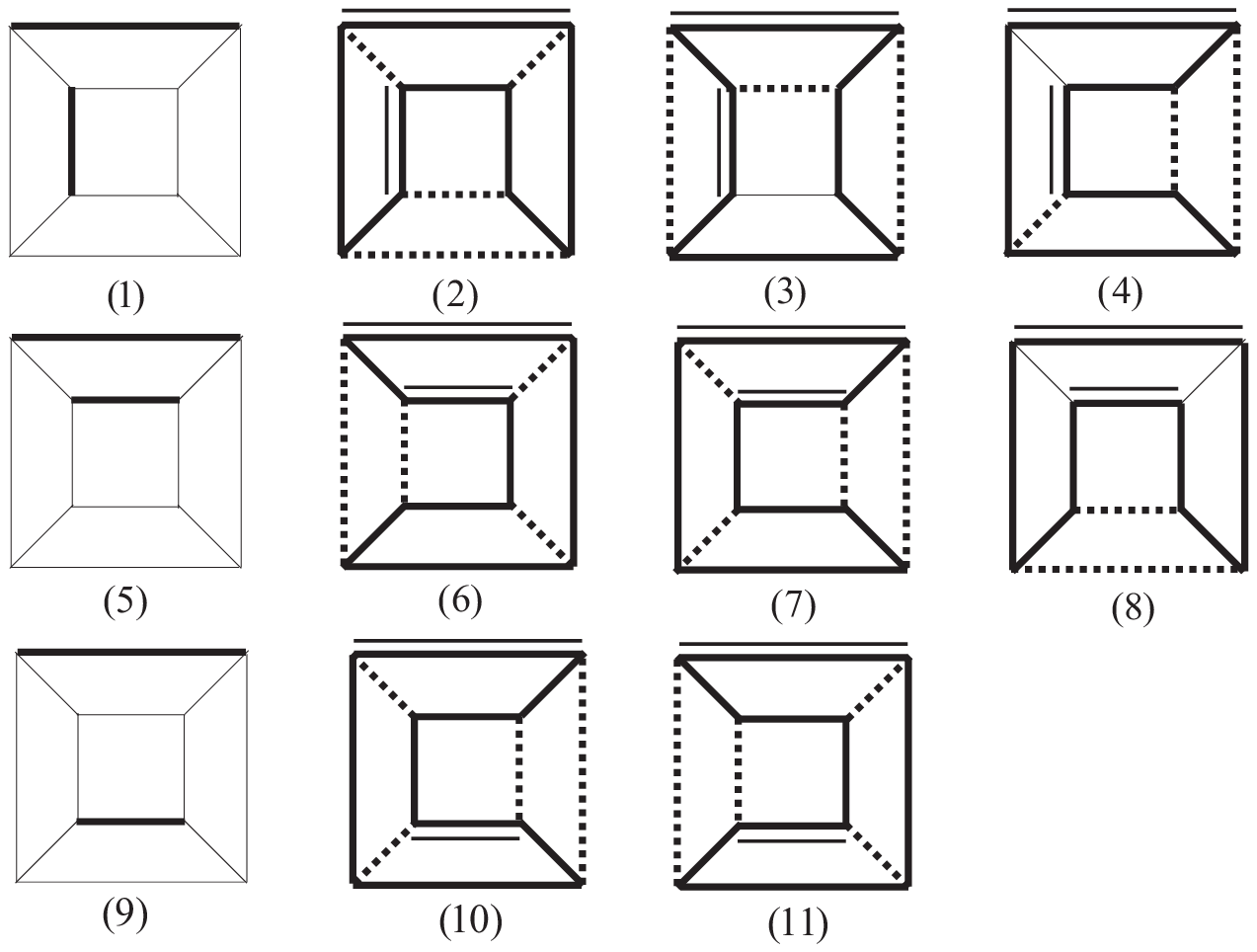}\\

{Fig. 4. Three non-isomorphic matchings of size 2 in $Q_{3}$ and
Hamiltonian cycles in Lemma \ref{iso}.}
\end{center}
\end{figure}

\begin{proof} There are three non-isomorphic matchings of size 2 in $Q_{3}$,
see Figure 4(1)(5)(9). In Figure 4(2), if $f$ is one of the dotted
edges, then there is a Hamiltonian cycle containing $M$ in
$Q_{3}-F$. By exhausting all possibilities for $\{M,f\}$, as it is
presented on Figure 4, we can verify the conclusion holds.
\end{proof}

\begin{Lemma}\label{matchingedge} Let $M$ be a matching of $Q_{3}$ with $|M|=3$.
Then there exists at most one edge $e$ in $E(Q_{3})\setminus M$ such
that $M\cup\{e\}$ is not contained in any Hamiltonian cycle of
$Q_{3}$.
\end{Lemma}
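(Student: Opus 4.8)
The plan is to reformulate extendability in terms of the \emph{complementary perfect matchings} of Hamiltonian cycles. First I would record a harmless observation: since $M$ is a matching and $e\notin M$, the graph $M\cup\{e\}$ has maximum degree $2$ and is acyclic (a shortest cycle of $Q_3$ is a $4$-cycle, which needs two independent edges plus two more, and a matching contributes at most two edges to any $4$-cycle), so $M\cup\{e\}$ is automatically a linear forest. Thus the obstruction is never the linear-forest condition of Lemma \ref{forest}; it is only that $|M\cup\{e\}|=4>2\cdot3-3$ is too large to be forced directly. So I would pass to complements. Each Hamiltonian cycle $C$ of $Q_3$ has $8$ edges, so $R=E(Q_3)\setminus C$ is a perfect matching, and it is not a coordinate matching $E_1,E_2,E_3$ (whose complements split into two $4$-cycles); conversely each of the six non-coordinate perfect matchings is the complement of a Hamiltonian cycle. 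Under this bijection $M\cup\{e\}\subseteq C$ holds iff $R=E(Q_3)\setminus C$ is disjoint from $M\cup\{e\}$. Writing $\mathcal R_M$ for the family of non-coordinate perfect matchings disjoint from $M$, the edge $e$ is \emph{bad} (that is, $M\cup\{e\}$ lies on no Hamiltonian cycle) exactly when $e\in\bigcap_{R\in\mathcal R_M}R$. Hence the lemma is equivalent to the bound $\bigl|\bigcap_{R\in\mathcal R_M}R\bigr|\le 1$.

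The engine is a structural fact: any two distinct non-coordinate perfect matchings of $Q_3$ meet in at most one edge. I would prove this directly. If distinct $R,R'$ shared two edges $\{g,h\}$, they would agree on the four vertices covered by $g,h$ and differ on the remaining four, which therefore induce a graph with two perfect matchings, i.e.\ a $4$-cycle $U$. Every $4$-cycle of $Q_3$ is a face, spanning two dimensions $j,k$, and its two matchings are the parallel pairs in dimension $j$ and in dimension $k$; moreover $\{g,h\}$ matches the opposite face, so it too is a parallel pair in dimension $j$ or $k$. A one-line dimension check then shows that $\{g,h\}$ together with one of the two matchings of $U$ yields a coordinate matching $E_j$, so one of $R,R'$ is coordinate, a contradiction. (The case $|R\cap R'|=3$ forces $R=R'$.) Consequently, \emph{as soon as} $|\mathcal R_M|\ge2$, choosing distinct $R,R'\in\mathcal R_M$ gives $\bigcap_{R\in\mathcal R_M}R\subseteq R\cap R'$, which has at most one edge, and the lemma follows immediately.

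It remains to secure $|\mathcal R_M|\ge2$, i.e.\ that $M$ lies on at least two Hamiltonian cycles; this is the crux, since $|\mathcal R_M|=1$ would make all four edges of that unique matching bad. Nonemptiness is free: $M$ is a linear forest with $|M|=3=2\cdot3-3$, so Lemma \ref{forest} places $M$ on a Hamiltonian cycle. To upgrade to two, I would classify the size-$3$ matchings of $Q_3$ up to isomorphism. As $M$ covers six of the eight vertices and $Q_3$ is bipartite with parts of size four, the two uncovered vertices have opposite parity, hence are at odd distance $1$ or $3$. If they are antipodal, $M$ is a perfect matching of the equatorial hexagon on the other six vertices, a single isomorphism type; if they are adjacent, forming an edge of dimension $d$, the residual six-vertex graph admits exactly three matchings, one using three dimension-$d$ edges and the other two interchanged by transposing the remaining coordinates. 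This gives three isomorphism types in total, and for each I would read off from the explicit list of the six non-coordinate matchings that $|\mathcal R_M|\ge2$ (in fact $|\mathcal R_M|=3$ in the antipodal type and $|\mathcal R_M|=2$ in each adjacent type), which finishes the proof.

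The main obstacle is precisely this last step: the complement bound disposes of everything once two disjoint non-coordinate matchings are available, but a priori a size-$3$ matching might lie on a unique Hamiltonian cycle, and ruling this out requires the small finite analysis above. This is entirely in the spirit of Lemmas \ref{3path} and \ref{iso}, so where convenient I would replace the abstract classification by a figure of the three representatives together with two explicit Hamiltonian cycles through each; this simultaneously verifies $|\mathcal R_M|\ge2$ and exhibits the at most one bad edge, showing as a byproduct that the bound is sharp, the unique bad edge occurring exactly in the mixed-dimension adjacent type.
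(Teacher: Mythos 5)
Your proof is correct, but it takes a genuinely different route from the paper's. The paper simply exhausts all pairs $\{M,e\}$ over the three isomorphism types of size-$3$ matchings and exhibits Hamiltonian cycles in Figure 5, with the single exception shown in Figure 5(1). You instead pass to complements: since $Q_{3}$ is cubic, its Hamiltonian cycles correspond bijectively to the six non-coordinate perfect matchings (those other than $E_{1},E_{2},E_{3}$, whose complements split into two opposite $4$-faces), and an edge $e$ is ``bad'' for $M$ precisely when $e$ lies in every non-coordinate perfect matching disjoint from $M$. Your structural lemma --- two distinct non-coordinate perfect matchings of $Q_{3}$ share at most one edge, proved via the fact that every $4$-cycle of $Q_{3}$ is a face --- then reduces the whole statement to showing that each size-$3$ matching extends to at least two Hamiltonian cycles, a check over only three configurations rather than twenty-seven pairs (nonemptiness being free from Lemma \ref{forest} since $|M|=2\cdot3-3$). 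I checked your counts: the antipodal type admits three disjoint non-coordinate matchings with empty common intersection, the all-parallel adjacent type admits two with empty intersection, and the mixed adjacent type admits exactly two meeting in one edge, which recovers the paper's unique exception. What your approach buys is an explanation of \emph{why} the bound is one (it is the maximum possible intersection of two complementary matchings) together with the sharper byproduct that the bad edge occurs only in the mixed type; what it costs is the complementation machinery and the intersection lemma, which the paper's figure-based verification avoids entirely. Both finite checks are of the same nature as those in Lemmas \ref{3path} and \ref{iso}, so either presentation is acceptable.
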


\begin{proof} There are three non-isomorphic matchings of size 3 in $Q_{3}$.
By exhausting all possibilities for $\{M,e\}$, we can find a
Hamiltonian cycle containing $M\cup\{e\}$ except that $\{M,e\}$ is
as on Figure 5(1), see Figure 5.
\end{proof}

\begin{figure}[h]
\begin{center}
\includegraphics[scale=0.7]{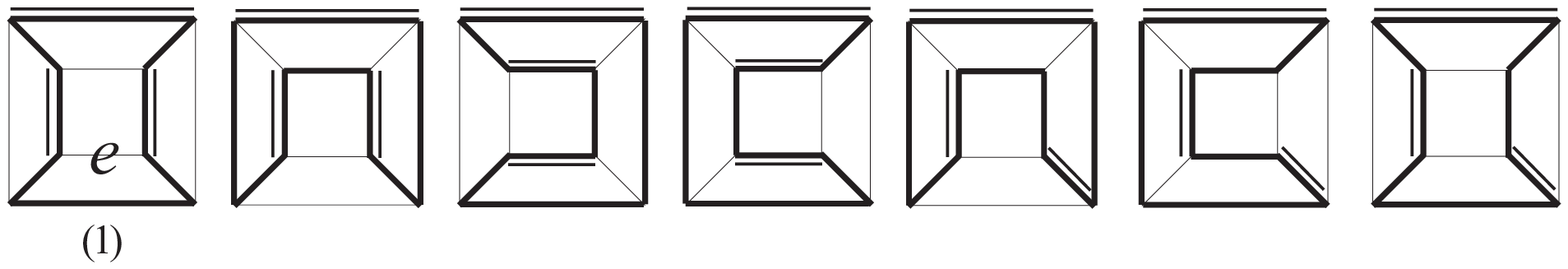}\\

{Fig. 5. Hamiltonian cycles of $Q_{3}$ containing $M\cup\{e\}$ in
Lemma \ref{matchingedge} with one exception.}
\end{center}
\end{figure}

\begin{Lemma}\label{ba1} Let $F\subseteq E(Q_{4})$ and $M$ be a matching of $Q_{4}-F$
with $|M|=4$ and $|F|=1$. If there exists $j\in[4]$ such that $M\cap
E_{j}=\emptyset$, then there exists a Hamiltonian cycle containing
$M$ in $Q_{4}-F$ except that $(Q_{4},M,F)$ is the case $(a)$ on
Figure 6 up to isomorphism.
\end{Lemma}

\begin{figure}[h]
\begin{center}
\includegraphics[scale=0.6]{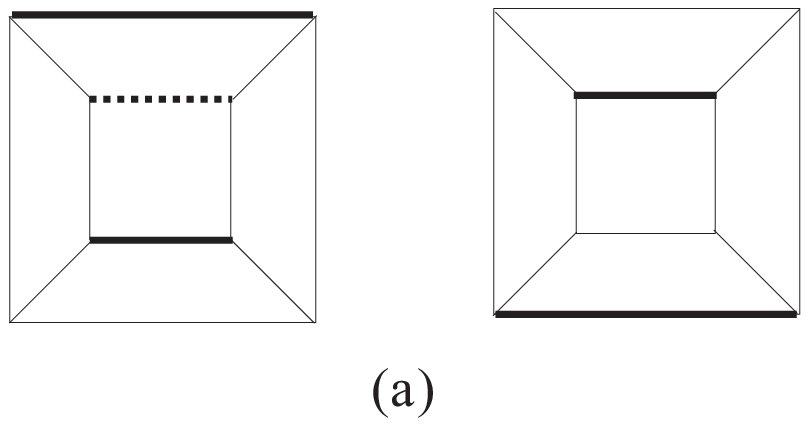}\\

{Fig. 6. The counterexample $(Q_{4},M,F)$ with the edges of $M$
highlighted and the edge of $F$ dotted, where the  edges between the
two copies of $Q_{3}$ are omitted.}
\end{center}
\end{figure}

\begin{proof} Decompose $Q_{4}$ into $Q_{3}^{0}$ and
$Q_{3}^{1}$ by $E_{j}$. By symmetry, we may assume that
$|M_{0}|\geq|M_{1}|$, moreover, we can choose $j$ such that
$|M_{0}|\leq3$. Let $F=\{f\}$.

Case 1. $|M_{0}|=3$. Let $M_{1}=\{xy\}$.

Subcase 1.1. $f\in E(Q_{3}^{0})$. Apply Lemma \ref{forest} to obtain
a Hamiltonian cycle $C_{0}$ of $Q_{3}^{0}$ containing $M_{0}$. If
$f\notin E(C_{0})$, let $uv\in E(C_{0})\setminus M_{0}$ such that
$u_{1}v_{1}\neq xy$; if $f\in E(C_{0})$ and $f_{1}\neq xy$, let
$uv=f$. By Lemma \ref{pac}, there exists a Hamiltonian path
$P_{u_{1}v_{1}}$ passing through $xy$ in $Q_{3}^{1}$. Then the
desired Hamiltonian cycle in $Q_{4}-F$ is induced by edges of
$(E(C_{0})\cup E(P_{u_{1}v_{1}})$
$\cup\{uu_{1},vv_{1}\})\setminus\{uv\}$.

If $f\in E(C_{0})$ and $f_{1}=xy$, then $f=x_{0}y_{0}$. Since
$|E(C_{0})\setminus (M_{0}\cup\{e\in E(C_{0})\mid e=f$ or $e$ is
adjacent to $f\})|\geq8-(3+3)>1$, there exists an edge $uv\in
E(C_{0})\setminus M_{0}$ such that $d(uv,f)=1$, then
$d(u_{1}v_{1},xy)=1$. By Lemma \ref{twopa}, there exist spanning
paths $P_{xy}=xy,P_{u_{1}v_{1}}$ of $Q_{3}^{1}$. Then the desired
Hamiltonian cycle in $Q_{4}-F$ is induced by edges of $(E(C_{0})\cup
E(P_{u_{1}v_{1}})$
$\cup\{x_{0}x,y_{0}y,xy,uu_{1},vv_{1}\})\setminus\{f,uv\}$.

Subcase 1.2. $f\notin E(Q_{3}^{0})$. Since $|F_{1}|\leq1$, by Lemma
\ref{cyc}, $xy$ lies on a Hamiltonian cycle $C_{1}$ in
$Q_{3}^{1}-F_{1}$. Since $|E(C_{1})|-(|M_{0}|+|M_{1}|)=4>2+1$, by
Lemma \ref{matchingedge} there exists an edge $uv\in
E(C_{1})\setminus M_{1}$ such that $u_{0}v_{0}\notin M_{0}$,
$f\notin\{u_{0}u,v_{0}v\}$ and $M_{0}\cup\{u_{0}v_{0}\}$ is
contained in some Hamiltonian cycle $C_{0}$ of $Q_{3}^{0}$. Then the
desired Hamiltonian cycle in $Q_{4}-F$ is induced by edges of
$(E(C_{0})\cup
E(C_{1})\cup\{u_{0}u,v_{0}v\})\setminus\{u_{0}v_{0},uv\}$.

Case 2. $|M_{0}|=2$

Since $|M_{0}|=|M_{1}|=2$, by symmetry, we may assume that $f\in
E_{j}\cup E(Q_{3}^{0})$. If $f\in E_{j}$, apply Lemma \ref{forest}
to find a Hamiltonian cycle $C_{0}$ of $Q_{3}^{0}$ containing
$M_{0}$. Since $|E(C_{0})\setminus(M_{0}\cup\{e\in E(C_{0})\mid e$
is adjacent to $f$ or $e_{1}\in M_{1}\})|\geq8-(2+2+2)=2$, there
exists an edge $uv\in E(C_{0})\setminus M_{0}$ such that
$u_{1}v_{1}\notin M_{1}$ and $f\notin\{uu_{1},vv_{1}\}$. If $f\in
E(Q_{3}^{0})$ and $(Q_{3}^{0},M_{0},F_{0})$ is not the case $(b)$ or
$(c)$, by Lemma \ref{iso} there exists a Hamiltonian cycle $C_{0}$
containing $M_{0}$ in $Q_{3}^{0}-F_{0}$, and let $uv\in
E(C_{0})\setminus M_{0}$ such that $u_{1}v_{1}\notin M_{1}$. If
$(Q_{3}^{0},M_{0},F_{0})$ is $(b)$ or $(c)$ and $f_{1}\notin M_{1}$,
since $M_{0}\cup\{f\}$ is a linear forest of size 3, by Lemma
\ref{forest} there is a Hamiltonian cycle $C_{0}$ of $Q_{3}^{0}$
containing $M_{0}\cup\{f\}$ and let $uv=f$, then $u_{1}v_{1}\notin
M_{1}$.

For the above three cases, since $M_{1}\cup\{u_{1}v_{1}\}$ is a
linear forest of size 3, by Lemma \ref{forest} there is a
Hamiltonian cycle $C_{1}$ containing $M_{1}\cup\{u_{1}v_{1}\}$ in
$Q_{3}^{1}$. Then the desired Hamiltonian cycle in $Q_{4}-F$ is
induced by edges of $(E(C_{0})\cup
E(C_{1})\cup\{uu_{1},vv_{1}\})\setminus\{uv,u_{1}v_{1}\}$.

If $(Q_{3}^{0},M_{0},F_{0})$ is the case $(b)$ and $f_{1}\in M_{1}$,
then let $M_{1}\setminus\{f_{1}\}=\{e\}$. Since $M$ is a matching,
we have $e\in\{e_{1},e_{2},e_{3},e_{4},e_{5},e_{6},e_{7}\}$, see
Figure 7. Then we can find a Hamiltonian cycle $C^{'}$ or $C^{''}$
containing $M$ in $Q_{4}-F$ except that $(Q_{4},M,F)$ is $(a)$.

If $(Q_{3}^{0},M_{0},F_{0})$ is the case $(c)$ and $f_{1}\in M_{1}$,
then every edge of $E(Q_{3}^{1})\setminus\{f_{1}\}$ which is not
adjacent to $f_{1}$ lies in a Hamiltonian cycle $C^{1}$ or $C^{2}$
in $Q_{4}-F$, see Figure 8. Hence $C^{1}$ or $C^{2}$ is the desired
Hamiltonian cycle in $Q_{4}-F$.
\end{proof}

\begin{figure}[h]
\begin{center}
\includegraphics[scale=0.6]{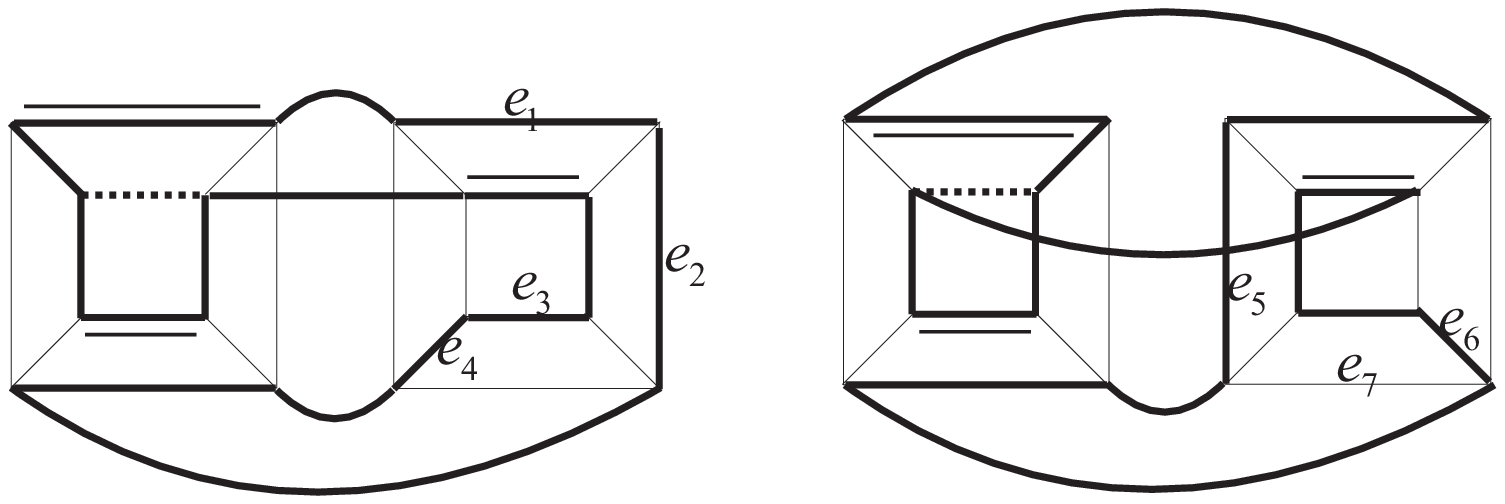}\\
{Fig. 7. The two Hamiltonian cycles $C^{'}$ and $C^{''}$ in $Q_{4}$}
\end{center}
\end{figure}

\begin{figure}[h]
\begin{center}
\includegraphics[scale=0.6]{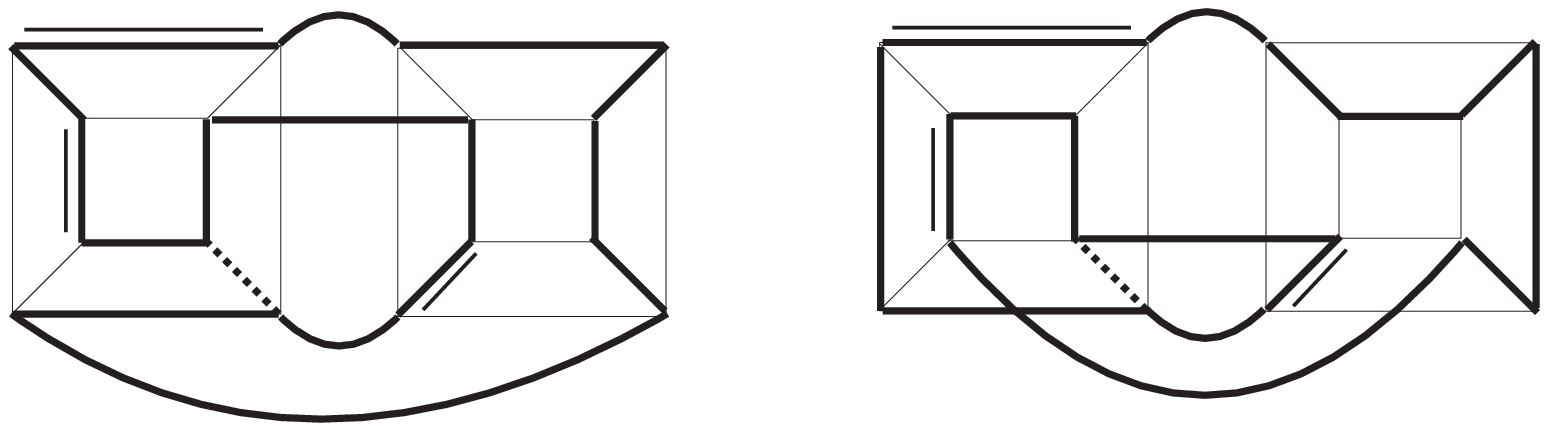}\\
{Fig. 8. The two Hamiltonian cycles $C^{1}$ and $C^{2}$ in $Q_{4}$}
\end{center}
\end{figure}

\begin{Lemma}\label{ba2} Let $F\subseteq E(Q_{4})$ and $M$ be a matching of $Q_{4}-F$
with $|M|=4$ and $|F|=1$. If $|M\cap E_{i}|=1$ for any $i\in[4]$,
then there exists a Hamiltonian cycle
containing $M$ in $Q_{4}-F$.
\end{Lemma}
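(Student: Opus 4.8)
The plan is to exploit the hypothesis that every dimension carries exactly one edge of $M$ by decomposing $Q_{4}$ along the dimension of the faulty edge, which is the move that keeps both subcubes fault-free. Let $f\in E_{k}$, and decompose $Q_{4}$ into $Q_{3}^{0}$ and $Q_{3}^{1}$ by $E_{k}$. Since $f$ is a $k$-dimensional edge it becomes one of the edges joining the two subcubes, so both $Q_{3}^{0}$ and $Q_{3}^{1}$ are fault-free. By hypothesis $|M\cap E_{k}|=1$; write $M\cap E_{k}=\{uu_{1}\}$ with $u\in V(Q_{3}^{0})$. Thus $uu_{1}$ is a connecting edge that must be used, while the remaining three edges of $M$ lie inside the subcubes, distributed as $M_{0}$ and $M_{1}$ with $|M_{0}|+|M_{1}|=3$. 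Because $M$ is a matching and $uu_{1}\in M$ already covers $u$ and $u_{1}$, we have $u\notin V(M_{0})$ and $u_{1}\notin V(M_{1})$, which is exactly the input that Lemma \ref{3path} wants.

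Next I would build the cycle by a path-merge anchored at $uu_{1}$. Write $f=ww_{1}$ with $w\in V(Q_{3}^{0})$; since $M$ is a matching of $Q_{4}-F$ we have $uu_{1}\neq f$, so $w\neq u$. The subcube $Q_{3}^{0}$ has exactly four vertices at odd distance from $u$ (three at distance $1$ and one at distance $3$), so I can select such a $v$ with $v\neq w$. Then $d(u,v)$ is odd, and since flipping coordinate $k$ is an isomorphism $Q_{3}^{0}\to Q_{3}^{1}$, $d(u_{1},v_{1})$ is odd as well. Applying Lemma \ref{3path} in $Q_{3}^{0}$ (using $u\notin V(M_{0})$) yields a Hamiltonian path $P_{uv}$ through $M_{0}$, and applying it again in $Q_{3}^{1}$ (using $u_{1}\notin V(M_{1})$) yields a Hamiltonian path $P_{u_{1}v_{1}}$ through $M_{1}$; when $M_{0}$ or $M_{1}$ is empty this is just Lemma \ref{Havel}. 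The edge set $E(P_{uv})\cup E(P_{u_{1}v_{1}})\cup\{uu_{1},vv_{1}\}$ is then a connected $2$-regular spanning subgraph, i.e.\ a single Hamiltonian cycle of $Q_{4}$.

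It remains to check that this cycle contains $M$ and avoids $f$. It contains $M_{0}\subseteq E(P_{uv})$, $M_{1}\subseteq E(P_{u_{1}v_{1}})$ and $uu_{1}$, hence all of $M$; its only connecting edges are $uu_{1}$ and $vv_{1}$, and since $w\neq u$ and $w\neq v$ neither of these equals $f=ww_{1}$, while $f$ lies inside no subcube, so $f$ is avoided. I expect the only real obstacle here to be bookkeeping rather than structure: one must make the single choice of $v$ respect both the parity requirement and the constraint $v\neq w$, and confirm the endpoint hypotheses of Lemma \ref{3path}. The decisive point — and the reason no exceptional configuration arises, in contrast with Lemma \ref{ba1} — is that decomposing along $\dim(f)$ simultaneously removes the fault from both subcubes and supplies a guaranteed connecting matching edge to anchor the merge, so the construction never has to compete with the fault for the same connecting slot.
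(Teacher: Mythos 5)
Your proof is correct. You decompose along the same dimension as the paper does (the hypothesis $|M\cap E_{i}|=1$ for all $i$ forces the dimension of $f$ to also carry exactly one edge of $M$, which is precisely the $j$ with $|M\cap E_{j}|=1=|F\cap E_{j}|$ that the paper selects), and your key observations --- that both subcubes become fault-free, that $uu_{1}$ supplies the anchor, and that $w\notin\{u,v\}$ keeps $f$ off the two connecting edges --- match the paper's. Where you diverge is the merging machinery: the paper applies Lemma \ref{forest} twice to get Hamiltonian \emph{cycles} $C_{0}\supseteq M_{0}$ and $C_{1}\supseteq M_{1}\cup\{u_{1}v_{1}\}$ (with $v$ taken as a neighbor of $u$ on $C_{0}$ so that $vv_{1}\neq f$), then deletes $uv$ and $u_{1}v_{1}$; you instead apply Lemma \ref{3path} twice to get Hamiltonian \emph{paths} $P_{uv}$ and $P_{u_{1}v_{1}}$ through $M_{0}$ and $M_{1}$, which join directly into a cycle with nothing to delete. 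Your route buys a slightly cleaner endgame (no bookkeeping about whether the deleted edges $uv$, $u_{1}v_{1}$ might belong to $M$ --- though $u\notin V(M_{0})$ makes that a non-issue either way) and works uniformly without the paper's normalization $|M_{0}|\geq|M_{1}|$; the cost is that you lean on Lemma \ref{3path}, whose full strength (allowing $|M_{0}|=3$) the paper's version never needs here since it only ever feeds Lemma \ref{forest} linear forests of size at most $3$. All hypotheses you invoke are checked correctly: $u\notin V(M_{0})$ and $u_{1}\notin V(M_{1})$ follow from $uu_{1}\in M$, the parity of $d(u_{1},v_{1})$ follows from the isomorphism between the subcubes, and a $v$ at odd distance from $u$ with $v\neq w$ always exists among the four candidates.
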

\begin{proof} Since $|M\cap E_{i}|=1$ for any $i\in[4]$ and $|F|=1$, we can choose
$j\in[4]$ such that $|M\cap E_{j}|=1=|F\cap E_{j}|$. Decompose
$Q_{4}$ into $Q_{3}^{0}$ and $Q_{3}^{1}$ by $E_{j}$ such that
$|M_{0}|\geq|M_{1}|$. Let $M\cap E_{j}=\{uu_{1}\}$ and $F=F\cap
E_{j}=\{f\}$. Apply Lemma \ref{forest} to find a Hamiltonian cycle
$C_{0}$ of $Q_{3}^{0}$ containing $M_{0}$. Select a neighbor $v$ of
$u$ on $C_{0}$ such that $vv_{1}\neq f$. Then
$M_{1}\cup\{u_{1}v_{1}\}$ forms a linear forest of size at most 2
and therefore, using Lemma \ref{forest} again, there is a
Hamiltonian cycle $C_{1}$ of $Q_{3}^{1}$ containing
$M_{1}\cup\{u_{1}v_{1}\}$. Then the desired Hamiltonian cycle in
$Q_{4}-F$ is induced by edges of $(E(C_{0})\cup
E(C_{1})\cup\{uu_{1},vv_{1}\})\setminus\{uv,u_{1}v_{1}\}$.
\end{proof}

\begin{Lemma}\label{ba} Let $F\subseteq E(Q_{4})$ and $M$ be a matching of $Q_{4}-F$
with $|M|=4$ and $|F|=1$. Then there exists a Hamiltonian cycle
containing $M$ in $Q_{4}-F$ except that $(Q_{4},M,F)$ is the case
$(a)$ on Figure 6 up to isomorphism.
\end{Lemma}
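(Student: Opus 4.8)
The plan is to obtain Lemma \ref{ba} as an immediate consequence of the two preceding lemmas via a complete case analysis on how the four edges of $M$ distribute among the dimension classes $E_1, E_2, E_3, E_4$. Since every edge of $Q_4$ lies in exactly one $E_i$, the four sets $M\cap E_1, M\cap E_2, M\cap E_3, M\cap E_4$ partition $M$, and as $|M|=4$ we may reason about which of these sets are empty.

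First I would check whether some dimension is unused by $M$. If there exists $j\in[4]$ with $M\cap E_j=\emptyset$, then the hypothesis of Lemma \ref{ba1} is satisfied, and that lemma directly yields a Hamiltonian cycle containing $M$ in $Q_4-F$, with the sole exception being the case $(a)$ of Figure 6. Otherwise, $M\cap E_i\neq\emptyset$ for every $i\in[4]$; since four nonempty sets partition a set of size $4$, each $M\cap E_i$ must contain exactly one edge, i.e.\ $|M\cap E_i|=1$ for all $i$. This is precisely the hypothesis of Lemma \ref{ba2}, which guarantees a Hamiltonian cycle containing $M$ in $Q_4-F$ with no exception at all.

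These two cases are exhaustive and mutually exclusive, so combining them gives the desired Hamiltonian cycle except when $(Q_4,M,F)$ is the configuration $(a)$ arising in the first case. The only thing that needs verification is that the dichotomy covers every possibility and that the second case introduces no new exception; the substantive combinatorial work has already been done in Lemmas \ref{ba1} and \ref{ba2}. Consequently I do not expect any genuine obstacle here: the argument is essentially a bookkeeping step that glues together the two base lemmas, and the exceptional triple $(a)$ is inherited verbatim from Lemma \ref{ba1}.
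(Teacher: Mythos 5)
Your proposal is correct and is essentially identical to the paper's own justification: the paper likewise observes that since $|M|=4$, either some $E_j$ is disjoint from $M$ (handled by Lemma \ref{ba1}, which contributes the exception $(a)$) or $|M\cap E_i|=1$ for every $i\in[4]$ (handled by Lemma \ref{ba2}, with no exception). The dichotomy is exhaustive for exactly the reason you give, so nothing further is needed.
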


In Lemma \ref{ba}, since $|M|=4$, there exists $j\in[4]$ such that
$M\cap E_{j}=\emptyset$ or $|M\cap E_{i}|=1$ for any $i\in[4]$,
so Lemma \ref{ba} holds by Lemma \ref{ba1} and \ref{ba2}.

\begin{Lemma}\label{se} Let $F=\{f,g\}\subseteq E(Q_{4})$ and
$M=\{e,h\}$ be a matching of $Q_{4}-F$.
Then there exists a Hamiltonian cycle containing $M$ in $Q_{4}-F$.
\end{Lemma}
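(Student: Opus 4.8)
The plan is to decompose $Q_4$ along a carefully chosen dimension and reduce to Hamiltonian cycles in two three-dimensional subcubes. Note first that Lemma \ref{cycle} with $n=4$ and $|E|=|M|=2$ would require $|F|\le n-2-\lfloor|M|/2\rfloor=1$, so it just fails to cover $|F|=2$; this single extra fault is exactly what the decomposition must absorb. Since $|F|=2$, I would choose $j\in[4]$ so that each subcube inherits at most one faulty edge: if $f$ and $g$ lie in a common dimension, take $j$ to be that dimension, so that $F\cap E_j=\{f,g\}$ and $F_0=F_1=\emptyset$; otherwise take $j$ to be the dimension of $f$, so that $f\in E_j$ while $g$ lies inside a single subcube, which I label $Q_3^0$. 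In every case I arrange $F_1=\emptyset$, so that $Q_3^1$ is a faultless $Q_3$ and $Q_3^0$ carries at most one fault.

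For the generic configuration I would first produce a Hamiltonian cycle $C_0$ of $Q_3^0-F_0$ through $M_0$: when $F_0=\emptyset$ this is Lemma \ref{forest} (since $M_0$ is a linear forest of size $|M_0|\le 2\le 3=2\cdot 3-3$ in the three-cube $Q_3^0$), and when $F_0=\{g\}$ it is Lemma \ref{iso} for $|M_0|=2$ and Lemma \ref{cyc} for $|M_0|\le 1$. I then select a connecting edge $uv\in E(C_0)\setminus M_0$ whose two lifts $uu_1,vv_1$ avoid the (at most two) faulty crossing edges and which satisfies $u_1v_1\notin M_1$. The eight edges of $C_0$ exclude at most $|M_0|$ edges of $M_0$, at most four edges incident in $C_0$ to an endpoint of a faulty crossing edge, and at most $|M_1|$ edges $uv$ with $u_1v_1\in M_1$; as $|M_0|+|M_1|\le 2$ the total of forbidden edges is at most $6<8$, so such an edge $uv$ exists. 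Now $M_1\cup\{u_1v_1\}$ is a linear forest of size at most $3=2\cdot 3-3$ in the faultless cube $Q_3^1$, so Lemma \ref{forest} yields a Hamiltonian cycle $C_1$ of $Q_3^1$ through it. Splicing produces the Hamiltonian cycle of $Q_4-F$ induced by $(E(C_0)\cup E(C_1)\cup\{uu_1,vv_1\})\setminus\{uv,u_1v_1\}$, which contains $M_0\cup M_1=M$ and avoids $F$.

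Two kinds of degeneracy must be treated separately, and they are where the real work lies. First, if $M$ has an edge $pp_1$ lying in the splitting dimension $E_j$, it must itself serve as one of the connecting edges; I would then fix $uu_1=pp_1$ and choose the partner edge $vv_1$ incident to $p$ along $C_0$, arranging the $Q_3^1$-side so that the lift $p_1v_1$ is insertable—this forces a more delicate, position-dependent choice on both cycles, or, failing that, a re-decomposition via Lemma \ref{decompose} that places $e$ and $h$ in opposite subcubes. Second, when $F_0=\{g\}$ and $(Q_3^0,M_0,F_0)$ is one of the exceptional triples $(b)$ or $(c)$ of Lemma \ref{iso}, no $C_0$ through $M_0$ avoiding $g$ exists; here I would either re-choose the splitting dimension (swapping the roles of $f$ and $g$, taking $j=\dim g$) so that the new $Q_3^0$-triple is non-exceptional, or, when $M_0\cup\{g\}$ is a linear forest, absorb $g$ into the forest and apply Lemma \ref{forest} to route $C_0$ through $M_0\cup\{g\}$. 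The main obstacle is precisely the simultaneous satisfaction of all three demands—at most one fault per subcube, enough fault-free crossing edges to splice, and avoidance of the small exceptional configurations of Lemma \ref{iso}—for a single choice of $j$; the generous slack in the counting above is what lets the splice survive, while the finitely many residual configurations are dispatched by re-decomposing or by direct construction.
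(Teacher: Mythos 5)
Your generic splice is sound, but the two ``degenerate'' situations you set aside are exactly where the content of the lemma lies, and neither is actually proved. First, when an edge of $M$ falls into the splitting dimension $E_{j}$ (which your fault-driven choice of $j$ cannot prevent), your primary remedy is left as ``a more delicate, position-dependent choice,'' and your fallback --- re-decomposing via Lemma \ref{decompose} so that $e$ and $h$ land in opposite subcubes --- abandons the invariant $F_{1}=\emptyset$ on which the rest of your construction rests: after that re-decomposition both faults may sit inside one subcube, and you have no way to produce a Hamiltonian cycle of $Q_{3}^{0}-\{f,g\}$ through $e$, since Lemma \ref{cyc} tolerates only $n-2=1$ fault in $Q_{3}$. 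Second, in the exceptional configurations $(b)$ and $(c)$ of Lemma \ref{iso} your ``absorb $g$'' move forces the connecting edge to be $uv=g$, but then you must also verify $uu_{1}\neq f\neq vv_{1}$ (in your setup $f$ is a crossing edge, possibly incident with an endpoint of $g$) and $g_{1}\notin M_{1}$; neither is checked, and the alternative of re-splitting on the dimension of $g$ can reintroduce the first degeneracy or land $f$ inside a subcube in a new exceptional triple. As written, the finitely many ``residual configurations'' are not dispatched, only named.

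For comparison, the paper sidesteps both obstacles by choosing the decomposition to separate the two matching edges rather than to control the faults: Lemma \ref{decompose} applied to the disjoint edges $e$ and $h$ gives $M_{0}=\{e\}$ and $M_{1}=\{h\}$, so no matching edge is ever a crossing edge and neither subcube carries two prescribed edges, eliminating any appeal to Lemma \ref{iso}. The case analysis is then only on where the faults fall; the hard subcase $F_{0}=F$ is handled by deleting one fault, applying Lemma \ref{cyc} to the other, and, if the surviving fault lies on the resulting cycle $C_{0}$, using it as the connecting edge, after which Lemma \ref{pac} finishes in the fault-free subcube. If you want to keep your fault-first decomposition, you must either carry out the crossing-matching-edge case in full or prove that a dimension $j$ exists avoiding all three bad events simultaneously; at present the proof is incomplete.
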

\begin{proof} Apply Lemma \ref{decompose} to decompose $Q_{4}$ into $Q_{3}^{0}$
and $Q_{3}^{1}$ by $E_{j}$ such that $e\in E(Q_{3}^{0})$ and $h\in
E(Q_{3}^{1})$. By symmetry, we may assume that $|F_{0}|\geq|F_{1}|$,
then $|F_{1}|\leq1$.

Case 1. $F_{1}=\emptyset$. If $|F_{0}|\leq1$, by Lemma \ref{cyc},
$e$ lies on a Hamiltonian cycle $C_{0}$ in $Q_{3}^{0}-F_{0}$. Since
$|E(C_{0})|=8>2+4$, let $uv\in E(C_{0})\setminus\{e\}$ such that
$\{uu_{1},vv_{1}\}\cap F=\emptyset$ and $u_{1}v_{1}\neq h$. If
$F_{0}=F$, since at least one of $f_{1}$ and $g_{1}$ is not $h$, we
may assume that $f_{1}\neq h$. By Lemma \ref{cyc}, $e$ lies on a
Hamiltonian cycle $C_{0}$ in $Q_{3}^{0}-\{g\}$. If $f\notin
E(C_{0})$, let $uv\in E(C_{0})\setminus\{e\}$ such that
$u_{1}v_{1}\neq h$; if $f\in E(C_{0})$, let $uv=f$, note that
$u_{1}v_{1}=f_{1}\neq h$.

Next apply Lemma \ref{pac} to find a Hamiltonian path
$P_{u_{1}v_{1}}$ of $Q_{3}^{1}$ passing through $h$. Then the
desired Hamiltonian cycle in $Q_{4}-F$ is induced by edges of
$(E(C_{0})\cup
E(P_{u_{1}v_{1}})\cup\{uu_{1},vv_{1}\})\setminus\{uv\}$.

Case 2. $|F_{1}|=1$. Then $|F_{0}|=|F_{1}|=1$ and $F\cap
E_{j}=\emptyset$. By Lemma \ref{cyc}, $e$ lies on a Hamiltonian
cycle $C_{0}$ in $Q_{3}^{0}-F_{0}$ and $h$ lies on a Hamiltonian
cycle $C_{1}$ in $Q_{3}^{1}-F_{1}$. Since
$|E(C_{0})\setminus\{e\}|-|E(Q_{3}^{1})\setminus E(C_{1})|=7-4>1$,
there has to be an edge $uv\in E(C_{0})\setminus\{e\}$ such that
$u_{1}v_{1}\in E(C_{1})\setminus\{h\}$. Then the desired Hamiltonian
cycle in $Q_{4}-F$ is induced by edges of $(E(C_{0})\cup
E(C_{1})\cup\{uu_{1},vv_{1}\})\setminus\{uv,u_{1}v_{1}\}$.
\end{proof}

Using Lemmas \ref{cycle}, \ref{cy}, \ref{ba} and \ref{se}, we
obtain,

\begin{Lemma}\label{base} Let $F\subseteq E(Q_{4})$ and $M$ be a matching
of $Q_{4}-F$ with $1\leq|M|\leq6$ and $|F|\leq3-\lceil
\frac{|M|}{2}\rceil$. Then there exists a Hamiltonian cycle
containing $M$ in $Q_{4}-F$ except that $(Q_{4},M,F)$ is the case
$(a)$ on Figure 6 up to isomorphism.
\end{Lemma}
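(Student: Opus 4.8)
The plan is to run a case analysis on the pair $(|M|,|F|)$ permitted by the hypothesis $|F|\leq 3-\lceil\frac{|M|}{2}\rceil$, and to dispatch each admissible pair to one of the four base results already established. First I would dispose of the case $F=\emptyset$: here $M$ is simply a matching of $Q_{4}$, so Lemma \ref{cy} immediately supplies a Hamiltonian cycle of $Q_{4}$ containing $M$. This already settles the two ``large matching'' cases $|M|=5$ and $|M|=6$, since for these values the tolerance bound forces $|F|=0$; it is worth noting that Lemma \ref{cycle} could not even be invoked when $|M|=6$, as then $|M|$ exceeds $2n-3=5$.

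Thus I may assume $|F|\geq1$, which confines attention to $|M|\in\{1,2,3,4\}$. The key observation is that for $n=4$, Lemma \ref{cycle} applies to a linear forest $E$ of size $|E|\leq 2n-3=5$ whenever $|F|\leq 2-\lfloor\frac{|E|}{2}\rfloor$, and a matching is a linear forest. Comparing the reach $2-\lfloor\frac{|M|}{2}\rfloor$ of Lemma \ref{cycle} with the hypothesis bound $3-\lceil\frac{|M|}{2}\rceil$ shows that the two coincide when $|M|$ is odd and differ by exactly one when $|M|$ is even. Consequently, Lemma \ref{cycle} applied with $E=M$ directly produces the desired cycle in every admissible case except the two ``overflow'' pairs in which $|F|$ exceeds Lemma \ref{cycle}'s reach by one, namely $(|M|,|F|)=(2,2)$ and $(|M|,|F|)=(4,1)$.

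These two residual cases are precisely the content of the specialized base lemmas. The case $|M|=2,\,|F|=2$ is exactly Lemma \ref{se} (writing $M=\{e,h\}$ and $F=\{f,g\}$), and the case $|M|=4,\,|F|=1$ is Lemma \ref{ba}, which also records the single unavoidable exception, namely $(Q_{4},M,F)$ equal to case $(a)$ on Figure 6 up to isomorphism. Assembling the pieces, every admissible triple $(Q_{4},M,F)$ yields a Hamiltonian cycle through $M$ in $Q_{4}-F$, with the lone exception inherited from Lemma \ref{ba}.

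I do not anticipate a genuine obstacle: the substance of the lemma resides entirely in the earlier results, and the argument is essentially a bookkeeping verification that the tolerance function $3-\lceil\frac{|M|}{2}\rceil$ has been calibrated so that the pairs it admits are exactly the union of those covered by Lemmas \ref{cycle}, \ref{cy}, \ref{se}, and \ref{ba}. The one point demanding care is the boundary arithmetic with the floor and ceiling functions, i.e.\ confirming that for even $|M|$ the overflow beyond Lemma \ref{cycle} is exactly one unit and lands precisely on the hypotheses of Lemmas \ref{se} and \ref{ba}, while for $|M|=6$ the overflow reaches only into $F=\emptyset$, which is already handled by Lemma \ref{cy}.
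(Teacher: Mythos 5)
Your proposal is correct and follows essentially the same route as the paper, which likewise derives Lemma \ref{base} by combining Lemmas \ref{cycle}, \ref{cy}, \ref{ba} and \ref{se}; your bookkeeping of the admissible pairs $(|M|,|F|)$ and of the two overflow cases $(2,2)$ and $(4,1)$ is accurate, and the exception is correctly inherited from Lemma \ref{ba}. In fact you supply more detail than the paper, which states the reduction without spelling out the case analysis.
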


\begin{Lemma}\label{decom} Let $F\subseteq E(Q_{5})$ and $M$ be a matching
of $Q_{5}-F$ with $1\leq|M|\leq8$ and
$|F|\leq4-\lceil\frac{|M|}{2}\rceil$. Then there exists $j\in[5]$
such that $|E_{j}\cap(M\cup F)|\leq1$ and none of
$(Q_{4}^{0},M_{0},F_{0})$ and $(Q_{4}^{1},M_{1},F_{1})$ is the case
$(a)$ on Figure 6 up to isomorphism, where $Q_{4}^{0}\cup
Q_{4}^{1}=Q_{5}-E_{j}$.
\end{Lemma}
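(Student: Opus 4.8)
The plan is to treat the statement as a counting problem over the five coordinate directions. Call a direction $j\in[5]$ \emph{usable} if $|E_{j}\cap(M\cup F)|\le1$, and \emph{good} if in addition neither $(Q_{4}^{0},M_{0},F_{0})$ nor $(Q_{4}^{1},M_{1},F_{1})$ is case $(a)$ of Figure 6. The goal is to exhibit a good direction, and I would do this by bounding the number of \emph{bad} (non-good) directions by $4$. The governing budget is $|M\cup F|=|M|+|F|\le|M|+\bigl(4-\lceil|M|/2\rceil\bigr)=4+\lfloor|M|/2\rfloor$, which is at most $8$ in general and at most $7$ whenever $|F|\ge1$, since $|F|\ge1$ forces $\lceil|M|/2\rceil\le3$ and hence $|M|\le6$. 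Because $\sum_{j}|E_{j}\cap(M\cup F)|=|M|+|F|$, the number of directions carrying at least two edges of $M\cup F$ is at most $\lfloor(|M|+|F|)/2\rfloor$.

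First I would dispose of the regimes in which case $(a)$ cannot occur at all. Since case $(a)$ has $|M_{\delta}|=4$ and $|F_{\delta}|=1$, a half can be case $(a)$ only when $|M|\ge4$ and $|F|\ge1$. Hence if $|F|=0$ or $|M|\le3$, every usable direction is automatically good, and it suffices to produce a usable one. Here $\lfloor(|M|+|F|)/2\rfloor\le\lfloor8/2\rfloor=4$ (and $\le2$ when $|M|\le3$), so at most $4$ directions are non-usable and at least one usable, hence good, direction remains.

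The substantive regime is $4\le|M|\le6$ and $1\le|F|\le2$, where $|M|+|F|\le7$. Then at most $\lfloor7/2\rfloor=3$ directions carry two or more edges of $M\cup F$, so it remains to prove that at most one usable direction splits $Q_{5}$ so that one of its two halves is case $(a)$; this gives at most $3+1=4$ bad directions out of $5$. Two reductions help. For a fixed $j$ at most one of the two halves can be case $(a)$, since $|M_{0}|+|M_{1}|\le|M|\le6<8$ rules out both halves having four matching edges; and a half equal to case $(a)$ must contain exactly four edges of $M$ and one edge of $F$.

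The crux, and the step I expect to be hardest, is the rigidity argument bounding the usable directions that reproduce case $(a)$. I would read off from Figure 6 the explicit coordinates of the four matching edges and the faulty edge of case $(a)$, and verify that its eight matched vertices do not all agree in any single coordinate; equivalently, the direction along which this configuration sits as one side of a $Q_{5}$-split is determined by the configuration itself. When $|M|=4$ this already forces the whole matching onto one side of the splitting direction, so a second such direction would require the eight vertices to be constant in a further coordinate, which the structure forbids, and the realizing direction is unique. When $|M|\in\{5,6\}$ one has $|F|=1$, so for any realizing split the single faulty edge must lie on the case-$(a)$ side in exactly the case-$(a)$ position; combined with $|M_{\delta}|=4$ and the same vertex-spread property, I expect this to again pin the direction uniquely, with the one or two stray matching edges only tightening the constraints. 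Granting this, the total number of bad directions is at most $4$, so a good direction $j$ exists and the proof is complete. The careful verification of the vertex-spread and fault-position rigidity of case $(a)$ is where the real work lies.
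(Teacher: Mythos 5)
There is a genuine gap at the crux of your argument: the claim that at most one usable direction can split $Q_{5}$ so that one of its two halves is case $(a)$. You leave this unproven (``I expect this to again pin the direction uniquely''), and in the regime $|M|\in\{5,6\}$, $|F|=1$ it is in fact false. The paper's Figure 9 exhibits a configuration $(Q_{5},M,F)$ in which \emph{both} the $E_{1}$-split and the $E_{4}$-split produce case $(a)$ in one of their halves. The reason is that all five edges of the case-$(a)$ configuration are parallel (contained in a single direction class, $E_{2}$ in the paper's normalization), so a different $4$-subset of $M$ together with the same faulty edge can realize case $(a)$ inside a half of a different split; your ``vertex-spread'' heuristic does not rule this out once $|M|>4$. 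With two case-$(a)$-producing usable directions your count becomes $3+2=5$ bad directions, which no longer leaves a good one, so the argument as stated does not close.

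The lemma is still true, but for a reason your counting does not capture: when several directions reproduce case $(a)$, the set $M\cup F$ is forced to be heavily concentrated in one direction class, which drastically reduces the number of non-usable directions. This is exactly the structural fact the paper exploits. Its proof reads off from Figure 6 that in case $(a)$ one has $M_{0}\cup F\subseteq E_{2}$; hence $|(M\cup F)\cap(E_{3}\cup E_{4}\cup E_{5})|\leq2$, so at least two of $E_{3},E_{4},E_{5}$ are usable, and it then argues that of those two at most one can reproduce case $(a)$ (if the $E_{4}$-split does, the $E_{5}$-split puts three edges of $M$ in each half). Separately, for $|M|=4$ the paper does not argue uniqueness at all but simply exhibits an alternative direction $j_{0}$ with $M\cap E_{j_{0}}=\emptyset$ splitting $M$ two-and-two. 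To repair your proof you would need to replace the global ``at most one bad usable direction'' bound by this kind of conditional analysis, or else prove the sharper statement that the number of non-usable directions plus the number of case-$(a)$-producing usable directions is at most four; either way the parallelism structure of case $(a)$ must actually be verified and used, which is where your proposal stops short.
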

\begin{proof} If $F=\emptyset$ or $|M|\leq3$, since $|M\cup
F|\leq|M|+4-\lceil\frac{|M|}{2}\rceil\leq8$, we can select $j\in[5]$
such that $|E_{j}\cap(M\cup F)|\leq1$. Moreover, none of
$(Q_{4}^{0},M_{0},F_{0})$ and $(Q_{4}^{1},M_{1},F_{1})$ is $(a)$. If
$|M|=4$, then $|F|\leq2$ and $|M\cup F|\leq6$. First select
$j\in[5]$ such that $|E_{j}\cap(M\cup F)|\leq1$. If one of
$(Q_{4}^{0},M_{0},F_{0})$ and $(Q_{4}^{1},M_{1},F_{1})$ is the case
$(a)$, then we can choose $j_{0}\in[5]$ such that $|F\cap
E_{j_{0}}|\leq1, M\cap E_{j_{0}}=\emptyset$ and the two subcubes
decomposed by $E_{j_{0}}$ both contain two edges of $M$. Hence $j$
or $j_{0}$ satisfies the lemma. So it suffices to prove the case
$5\leq|M|\leq6$ and $|F|=1$.

Since $|M|+|F|\leq7$, there exists $i\in[5]$ such that
$|E_{i}\cap(M\cup F)|\leq1$. Decompose $Q_{5}$ into two subcubes
$Q_{4}^{0_{i}}$ and $Q_{4}^{1_{i}}$ by $E_{i}$. If none of
$(Q_{4}^{0_{i}},M_{0},F_{0})$ and $(Q_{4}^{1_{i}},M_{1},F_{1})$ is
the case $(a)$, then the conclusion holds. Otherwise, assume that
$(Q_{4}^{0_{i}},M_{0},F_{0})$ is $(a)$. Without loss of generality,
assume that $i=1$ and $(M_{0}\cup F) \subseteq E_{2}$, then $|(M\cup
F)\cap(E_{3}\cup E_{4}\cup E_{5})|\leq2$. Hence, at least two of
$E_{3},E_{4}$ and $E_{5}$ satisfy $|E_{j}\cap(M\cup F)|\leq1$, where
$j\in\{3,4,5\}$. Assume that $|E_{4}\cap(M\cup F)|\leq1$ and
$|E_{5}\cap(M\cup F)|\leq1$. We claim that $j=4$ or $j=5$ satisfies
the lemma. If $j=4$ does not satisfy the lemma, then one of the two
subcubes decomposed from $Q_{5}$ by $E_{4}$ is the case $(a)$, see
Figure 9. Then the two subcubes decomposed from $Q_{5}$ by $E_{5}$
both contain three edges of $M$, so none of them is the case $(a)$,
hence $j=5$ satisfies the lemma. The proof is complete.
\end{proof}

\begin{figure}[h]
\begin{center}
\includegraphics[scale=0.7]{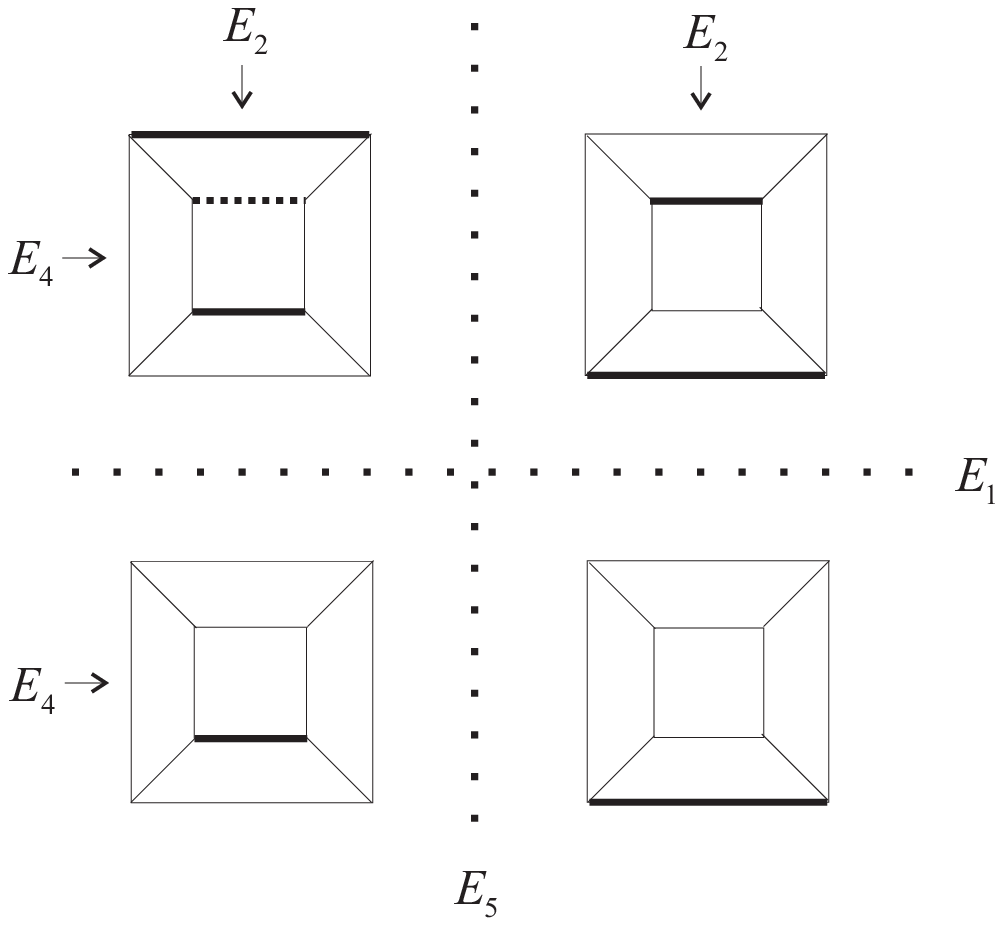}\\

{Fig. 9. $(Q_{5},M,F)$ satisfying one of the two subcubes decomposed
by $E_{i}$ is the case $(a)$ where $i=1,4$.}
\end{center}
\end{figure}

\section{Main results}

\begin{thm}\label{main1} For $n\geq2$, let $M$ be a matching
of $Q_{n}$ with $|M|\leq2n-1$. Then there exists a Hamiltonian cycle
of $Q_{n}$ containing $M$.
\end{thm}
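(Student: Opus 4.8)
The plan is to induct on $n$. The cases $n\le 4$ are exactly Lemma \ref{cy}, so I assume $n\ge 5$ and that the statement holds in $Q_{n-1}$, i.e. every matching of $Q_{n-1}$ with at most $2(n-1)-1=2n-3$ edges lies on a Hamiltonian cycle. If $|M|\le 2n-3$ then $M$ is a linear forest of size at most $2n-3$ and Lemma \ref{forest} already produces a Hamiltonian cycle through $M$; hence it suffices to treat the two top cases $|M|\in\{2n-2,2n-1\}$.

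For such an $M$ I would split $Q_n=Q_{n-1}^0\cup Q_{n-1}^1$ across a carefully chosen dimension $E_j$. Writing $m_j=|M\cap E_j|$, we have $\sum_{i=1}^n m_i=|M|<2n$, so some coordinate satisfies $m_j\le 1$; the real work is to pick $j$ so that, in addition, both $|M_0|$ and $|M_1|$ are at most $2n-3$. Since $|M_0|+|M_1|=|M|-m_j$, this balance fails only when one face already carries at least $2n-2$ of the edges. To force balance when $|M|=2n-2$ I would feed two disjoint edges $e,f\in M$ into Lemma \ref{decompose} so that $e$ and $f$ land in different subcubes, guaranteeing $|M_0|,|M_1|\ge 1$ and hence $\le 2n-3$; an analogous but more careful selection (in the spirit of Lemma \ref{decom}) is needed when $|M|=2n-1$ to place at least two edges on each side.

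In the balanced case the merge is routine. Relabel so that $|M_0|\ge|M_1|$, whence $|M_1|\le\lfloor|M|/2\rfloor\le n-1$. The induction hypothesis yields a Hamiltonian cycle $C_0$ of $Q_{n-1}^0$ through $M_0$. I would then choose an edge $uv\in E(C_0)\setminus M_0$ with $u_1v_1\notin M_1$; such an edge exists because $C_0$ has $2^{n-1}$ edges while only $|M_0|+|M_1|\le 2n-1$ of them are excluded. Now $M_1\cup\{u_1v_1\}$ is a linear forest of size at most $n$, so it lies on a Hamiltonian cycle $C_1$ of $Q_{n-1}^1$ — by the induction hypothesis if it is a matching, and otherwise by Lemma \ref{forest}, which applies since $n\le 2n-5$ for $n\ge5$. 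Replacing $uv$ and $u_1v_1$ by the cross edges $uu_1,vv_1$ then fuses $C_0$ and $C_1$ into a single Hamiltonian cycle of $Q_n$ through $M$. When $m_j=1$ I would instead take the forced cross edge $uu_1\in M$ as one of the two connectors, let $v$ be a neighbour of $u$ on $C_0$, and route $u_1v_1$ through $C_1$ exactly as above.

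The hard part is the concentrated case, in which for every coordinate with $m_j\le 1$ one face already contains at least $2n-2$ edges of $M$; equivalently $M$ fills, up to a bounded number of stray edges, a single face $Q_{n-1}^0$. Here the inductive bound is exceeded by one and a direct recursion is impossible. My plan is to descend: intersecting the concentrating coordinates confines all but $O(1)$ edges of $M$ to a subcube $Q_{n-a}$ on which the core of $M$ is (near-)perfect, so that Fink's theorem (Lemma \ref{Fink}) supplies a Hamiltonian cycle of that subcube through the core; I would then reattach the few stray edges and splice in the essentially free complementary subcubes by the cross-edge exchange of the previous paragraph, just as in the base lemmas \ref{ba}--\ref{base}. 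Making this descent precise — in particular certifying that the failure of every balanced, low-crossing split really does force a near-perfect matching on a subface, so that Lemma \ref{Fink} becomes applicable — is the step I expect to demand the most care, and is the main obstacle of the argument.
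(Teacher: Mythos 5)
There is a genuine gap, and it sits exactly where you predict: the ``concentrated'' case. Your plan for it rests on a false premise. If every low-crossing coordinate gives an unbalanced split, this does \emph{not} force $M$ to be near-perfect on any subface: a matching with $|M|=2n-1$ covers only $4n-2$ vertices, so it cannot be close to perfect in any subcube of dimension $d$ with $2^{d}\gg 4n$, and nothing confines $M$ to a subcube that small (for instance, $M$ may lie entirely in one $(n-1)$-face and use every direction of that face at least twice, at which point your descent has neither a balanced low-crossing split nor a near-perfect core, and Lemma~\ref{Fink} never becomes applicable). Since you explicitly leave this step unproved, the argument does not close. A secondary issue: invoking Lemma~\ref{decompose} to separate two edges of $M$ guarantees $|M_0|,|M_1|\ge1$ but gives no control over $|M\cap E_j|$, so your merge (which assumes $m_j\le1$) need not apply to the decomposition that lemma returns.

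The paper resolves the hard case by a different and more elementary device: it never insists on a balanced split. It fixes any $j$ with $|M\cap E_j|\le1$ and accepts $|M_0|\in\{2n-2,2n-1\}$. It then deletes one or two edges $xy$ (and $uv$) from $M_0$ so that the remainder has size at most $2(n-1)-1$, applies the induction hypothesis to get a Hamiltonian cycle $C_0$ of $Q_{n-1}^{0}$ through the remainder, and, if a deleted edge $xy$ is absent from $C_0$, repairs it by choosing neighbours $s,t$ of $x,y$ on opposite arcs of $C_0$, deleting $xs,yt$, inserting $xy$, and rerouting through $Q_{n-1}^{1}$ via a Hamiltonian path $P_{s_1t_1}$ (Lemmas~\ref{Havel} and~\ref{pac}) or via two spanning paths (Lemma~\ref{twopa}) when two repairs are needed simultaneously. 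That rerouting idea is the missing ingredient in your proposal; without it, or a correct substitute, the cases $|M_0|\ge 2n-2$ remain unproved.
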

\begin{proof} We prove the theorem by induction on $n$. The theorem holds for
$n=2,3,4$ by Lemma \ref{cy}. Assume that the theorem holds for
$n-1(\geq4)$, we are to show it holds for $n(\geq5)$. Since
$|M|\leq2n-1$, there exists $j\in[n]$ such that $|M\cap
E_{j}|\leq1$. Decompose $Q_{n}$ into $Q_{n-1}^{0}$ and $Q_{n-1}^{1}$
by $E_{j}$ such that $|M_{0}|\geq|M_{1}|$, then $|M_{1}|\leq n-1$.

\textbf{Claim 1.} If there exists a Hamiltonian cycle $C_{0}$
containing $M_{0}$ in $Q_{n-1}^{0}$, then we can construct a
Hamiltonian cycle of $Q_{n}$ containing $M$.

If $M\cap E_{j}=\{uu_{1}\}$, select a neighbor $v$ of $u$ on
$C_{0}$. Since $M$ is a matching, we have $\{uv,u_{1}v_{1}\}\cap
M=\emptyset$. If $M\cap E_{j}=\emptyset$, since
$|E(C_{0})|-(|M_{0}|+|M_{1}|)\geq2^{n-1}-(2n-1)\geq1$ for $n\geq5$,
there exists an edge $uv\in E(C_{0})\setminus M_{0}$ such that
$u_{1}v_{1}\notin M_{1}$. Since $M_{1}\cup\{u_{1}v_{1}\}$ is a
linear forest with $|M_{1}\cup\{u_{1}v_{1}\}|\leq
n-1+1\leq2(n-1)-3$, by Lemma \ref{forest}, there exists a
Hamiltonian cycle $C_{1}$ containing $M_{1}\cup\{u_{1}v_{1}\}$ in
$Q_{n-1}^{1}$. Then the desired Hamiltonian cycle of $Q_{n}$ is
induced by edges of $(E(C_{0})\cup
E(C_{1})\cup\{uu_{1},vv_{1}\})\setminus\{uv,u_{1}v_{1}\}$. Thus
Claim 1 is proved.

If $|M_{0}|\leq2(n-1)-1$, by the induction hypothesis, there exists
a Hamiltonian cycle $C_{0}$ containing $M_{0}$ in $Q_{n-1}^{0}$ and
therefore, the theorem holds by Claim 1. So in what follows we can
assume that $2n-2\leq|M_{0}|\leq|M|\leq2n-1$. We distinguish two
cases to consider.

Case 1. $|M_{0}|=2n-2$

Let $xy\in M_{0}$ and apply the induction hypothesis to find a
Hamiltonian cycle $C_{0}$ containing $M_{0}\setminus\{xy\}$ in
$Q_{n-1}^{0}$. If $xy\in E(C_{0})$, then $C_{0}$ is a Hamiltonian
cycle containing $M_{0}$ in $Q_{n-1}^{0}$ and therefore, the theorem
holds by Claim 1. If $xy\notin E(C_{0})$, choose neighbors $s$ and
$t$ of $x$ and $y$ on $C_{0}$ such that one of the paths between $x$
and $y$ contains $s$ and the other contains $t$. Note that there are
two ways to choose $s$ and $t$.

If $M\cap E_{j}=\emptyset$, since $2n-2=|M_{0}|\leq|M|\leq2n-1$, we
have $|M_{1}|\leq1$ and therefore, moreover, we can choose $s$ and
$t$ such that $s_{1}t_{1}\notin M_{1}$. Since $M$ is a matching and
$xy\in M$, we have $\{xs,yt\}\cap M=\emptyset$. Since
$d(s_{1},t_{1})$ is odd, by Lemma \ref{Havel} and \ref{pac}, there
exists a Hamiltonian path $P_{s_{1}t_{1}}$ of $Q_{n-1}^{1}$ passing
through $M_{1}$. Then the desired Hamiltonian cycle of $Q_{n}$ is
induced by edges of $(E(C_{0})\cup
E(P_{s_{1}t_{1}})\cup\{xy,ss_{1},tt_{1}\})\setminus\{xs,yt\})$.

If $M\cap E_{j}=\{uu_{1}\}$, then $M_{1}=\emptyset$. Since $Q_{n}$
is bipartite and $d(x,y)$ is odd, we can choose $s$ and $t$ such
that $u\notin\{s,t\}$. Select a neighbor $v$ of $u$ on $C_{0}$ such
that $v\notin\{s,t\}$. Since $d(u,v)$ and $d(s,t)$ are odd,
$d(u_{1},v_{1})$ and $d(s_{1},t_{1})$ are also odd, by Lemma
\ref{twopa}, there exist spanning paths
$P_{u_{1}v_{1}},P_{s_{1}t_{1}}$ of $Q_{n-1}^{1}$. Hence the desired
Hamiltonian cycle of $Q_{n}$ is induced by edges of $(E(C_{0})\cup
E(P_{u_{1}v_{1}})\cup E(P_{s_{1}t_{1}})$
$\cup\{xy,uu_{1},vv_{1},ss_{1},tt_{1}\})\setminus\{uv,xs,yt\}$.

Case 2. $|M_{0}|=2n-1$

Since $2n-1=|M_{0}|\leq|M|\leq2n-1$, we have $M=M_{0}$ and $M\cap
E_{j}=\emptyset=M_{1}$. Let $xy,uv\in M_{0}$ and apply the induction
hypothesis to find a Hamiltonian cycle $C_{0}$ containing
$M_{0}\setminus\{xy,uv\}$ in $Q_{n-1}^{0}$. If $\{xy,uv\}\subseteq
E(C_{0})$, then the conclusion holds by Claim 1. If $|\{xy,uv\}\cap
E(C_{0})|=1$, without loss of generality, assume that $xy\notin
E(C_{0})$ and $uv\in E(C_{0})$. Choose neighbors $s$ and $t$ of $x$
and $y$ on $C_{0}$ such that one of the paths between $x$ and $y$
contains $s$ and the other contains $t$. Since $d(s_{1},t_{1})$ is
odd, by Lemma \ref{Havel}, there is a Hamiltonian path
$P_{s_{1}t_{1}}$ of $Q_{n-1}^{1}$. Then the desired Hamiltonian
cycle of $Q_{n}$ is induced by edges of $(E(C_{0})\cup
E(P_{s_{1}t_{1}})\cup\{xy,ss_{1},tt_{1}\})\setminus\{xs,yt\})$.

If $\{xy,uv\}\cap E(C_{0})=\emptyset$, there are two cases up to
isomorphism, see Figure 10. Choose neighbors $r,s,w,t$ of $x,y,u,v$
on $C_{0}$, respectively, see Figure 10. Since $x,y,u,v$ are
pairwise distinct and $\{xy,uv\}\subseteq M$, we have $r,s,w,t$ are
pairwise distinct and $\{xr,ys,uw,vt\}\cap M=\emptyset$.

\begin{figure}[h]
\begin{center}
\includegraphics[scale=0.5]{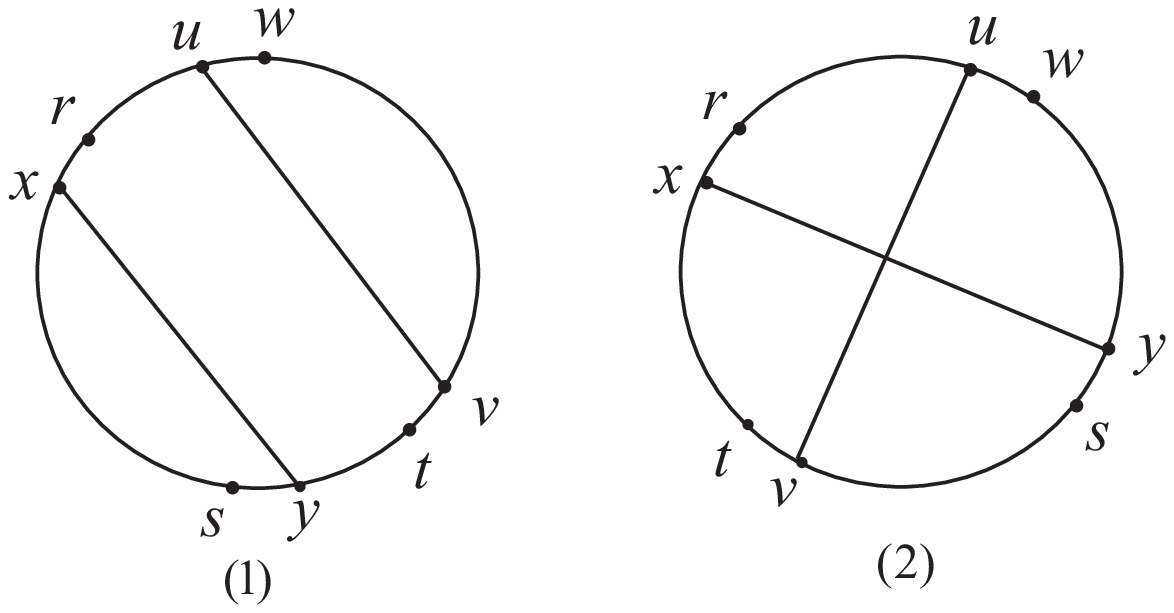}\\
{Fig. 10. The choice of neighbors $r,s,w,t$ of $x,y,u,v$ on $C_{0}$}
\end{center}
\end{figure}

If one of the paths on $C_{0}$ between $u$ and $v$ contains both $x$
and $y$, see Figure 10(1). Since $d(r,s)$ and $d(w,t)$ are odd,
$d(r_{1},s_{1})$ and $d(w_{1},t_{1})$ are also odd, by Lemma
\ref{twopa}, there exist spanning paths
$P_{r_{1}s_{1}},P_{w_{1}t_{1}}$ of $Q_{n-1}^{1}$. Hence the desired
Hamiltonian cycle of $Q_{n}$ is induced by edges of $(E(C_{0})\cup
E(P_{r_{1}s_{1}})\cup E(P_{w_{1}t_{1}})$
$\cup\{uv,xy,rr_{1},ss_{1},ww_{1},tt_{1}\})\setminus\{xr,ys,uw,vt\}$.

If one of the paths on $C_{0}$ between $u$ and $v$ contains $x$ and
the other contains $y$, see Figure 10(2). Since $Q_{n}$ is bipartite
and $d(u,v)$ is odd, $d(x,u)+d(x,v)$ is odd. Without loss of
generality, assume that $d(x,v)$ is odd, then $d(y,u)$ is odd and
therefore, $d(r_{1},t_{1})$ and $d(w_{1},s_{1})$ are both odd. By
Lemma \ref{twopa}, there exist spanning paths
$P_{r_{1}t_{1}},P_{w_{1}s_{1}}$ of $Q_{n-1}^{1}$. Hence the desired
Hamiltonian cycle of $Q_{n}$ is induced by edges of $(E(C_{0})\cup
E(P_{r_{1}t_{1}})\cup E(P_{w_{1}s_{1}})$
$\cup\{uv,xy,rr_{1},ss_{1},ww_{1},tt_{1}\})\setminus\{xr,ys,uw,vt\}$.
\end{proof}

\begin{thm}\label{main} For $n\geq4$, let $F\subseteq E(Q_{n})$ and $M$
be a matching of $Q_{n}-F$ with $1\leq|M|\leq2n-2$ and $|F|\leq
n-1-\lceil\frac{|M|}{2}\rceil$. Then there exists a Hamiltonian
cycle containing $M$ in $Q_{n}-F$ except that $(Q_{n},M,F)$ is the
case $(a)$ depicted on Figure 6 up to isomorphism.
\end{thm}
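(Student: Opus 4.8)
The plan is to argue by induction on $n$, taking $n=4$ as the base case, which is precisely Lemma \ref{base}. So assume the statement holds for $n-1\geq 4$ and consider $Q_n$ with $n\geq 5$. First I would observe that for $n\geq 5$ the exceptional triple cannot occur: case $(a)$ lives in $Q_4$, so no $(Q_n,M,F)$ with $n\geq 5$ is isomorphic to it, and hence the goal reduces to simply producing a Hamiltonian cycle through $M$ in $Q_n-F$. A useful first reduction is that the hypothesis $|F|\leq n-1-\lceil |M|/2\rceil$ forces $F=\emptyset$ whenever $|M|\geq 2n-3$ (indeed $\lceil (2n-3)/2\rceil=n-1$); in that regime the claim follows at once from Theorem \ref{main1}, since $|M|\leq 2n-2\leq 2n-1$. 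Thus it remains to treat $1\leq |M|\leq 2n-4$ together with $|F|\leq n-1-\lceil |M|/2\rceil$.

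For the genuinely faulty range I would decompose $Q_n$ into $Q_{n-1}^{0}\cup Q_{n-1}^{1}$ along a carefully chosen direction $E_j$. The selection of $j$ is the key preparatory step: I want $|E_j\cap(M\cup F)|\leq 1$, so that at most one edge of $M\cup F$ crosses, and I want neither induced triple $(Q_{n-1}^{\delta},M_\delta,F_\delta)$ to be the exceptional case. For $n=5$ both requirements are supplied verbatim by Lemma \ref{decom} (decomposing into two copies of $Q_4$ and invoking the base case as the induction hypothesis). For $n\geq 6$ the subcubes are copies of $Q_{n-1}$ with $n-1\geq 5$, where the exceptional case cannot arise at all, so only the crossing condition is needed; since $|M\cup F|=|M|+|F|\leq n-1+\lfloor |M|/2\rfloor\leq 2n-3<2n$, the pigeonhole principle yields (at least two) directions $j$ with $|E_j\cap(M\cup F)|\leq 1$.

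Having fixed $j$, I would relabel so that $|M_0|\geq |M_1|$ and arrange, by placing the crossing edge appropriately and by the choice of $j$, that the fault budget splits correctly, namely $|F_0|\leq (n-1)-1-\lceil |M_0|/2\rceil$ and that $(Q_{n-1}^{0},M_0,F_0)$ is non-exceptional with $|M_0|\geq 1$. The induction hypothesis then furnishes a Hamiltonian cycle $C_0$ of $Q_{n-1}^{0}-F_0$ through $M_0$. The construction is completed in the spirit of Claim 1 in the proof of Theorem \ref{main1}: I would locate an edge $uv\in E(C_0)\setminus M_0$ whose lift $u_1v_1$ is vertex-disjoint from $M_1$, with $uu_1,vv_1\notin F$ and $u_1v_1$ not faulty, so that $M_1\cup\{u_1v_1\}$ is a linear forest in $Q_{n-1}^{1}$; then either Lemma \ref{cycle} or the induction hypothesis produces a Hamiltonian cycle $C_1$ of $Q_{n-1}^{1}-F_1$ through $M_1\cup\{u_1v_1\}$, and splicing along $\{uu_1,vv_1\}$ while deleting $\{uv,u_1v_1\}$ gives the desired cycle of $Q_n-F$. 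The few boundary subcases where $|M_0|$ is near $2(n-1)-2$, or where $M\cap E_j\neq\emptyset$, would be handled exactly as Cases 1 and 2 of Theorem \ref{main1}, replacing the spanning-path lemmas by their fault-tolerant analogues (Lemmas \ref{pad}, \ref{cyc}, \ref{cycle}) to route through $F_1$.

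The main obstacle I anticipate is the bookkeeping that guarantees both subcubes inherit admissible parameters after the split: one must verify, across the distribution of $M$ and $F$ into $M_0,M_1,F_0,F_1$ and the single crossing edge, that $|F_\delta|\leq (n-1)-1-\lceil |M_\delta|/2\rceil$ holds for the subcube to which induction is applied, while retaining enough unused crossing edges of $C_0$ to pick $uv$ avoiding $M_0,M_1$ and $F$. This is tightest when $|M_0|$ nearly saturates $2n-4$ and $F$ concentrates in $Q_{n-1}^{0}$; here the freedom in selecting $j$ (and, for $n=5$, the non-exceptionality clause of Lemma \ref{decom}) is exactly what rescues the argument, and confirming it in every tight configuration is the crux of the proof.
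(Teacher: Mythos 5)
Your skeleton coincides with the paper's: induction on $n$ with Lemma \ref{base} as the base, the reduction to $|F|\geq1$ hence $|M|\leq 2n-4$, a splitting direction $j$ with $|E_j\cap(M\cup F)|\leq1$ chosen via Lemma \ref{decom} when $n=5$, a Hamiltonian cycle $C_0$ through $M_0$ in $Q_{n-1}^{0}-F_0$, and a splice along $\{uu_1,vv_1\}$. The problem is the step you describe as ``arrange \dots that the fault budget splits correctly, namely $|F_0|\leq (n-1)-1-\lceil|M_0|/2\rceil$'': this cannot in general be arranged, and the configurations where it fails are exactly where the paper's proof does its real work. Concretely, if $(M\cup F)\cap E_j=\emptyset$ and all of $M$ and $F$ fall into $Q_{n-1}^{0}$, then $|F_0|=n-1-\lceil|M_0|/2\rceil$, which exceeds the subcube's budget $(n-1)-1-\lceil|M_0|/2\rceil$ by one, so the induction hypothesis simply does not apply to $(Q_{n-1}^{0},M_0,F_0)$; no choice of $j$ avoids this, since every direction with $|E_j\cap(M\cup F)|\leq1$ may put everything on one side. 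The same off-by-one occurs when $|M_0|=|M|-1$ with $|M|$ even and $F_0=F$, and dually when $M_0=M$, $F_0=\emptyset$ and $|M|=2$, where $|F_1|=n-2$ exceeds the $(n-1)-2$ faults tolerated by Lemma \ref{cyc} in $Q_{n-1}^{1}$.

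The paper closes these gaps with repair devices that are absent from your plan: apply the induction hypothesis to $F_0\setminus\{f\}$ (or to $M_0\setminus\{xy\}$) so the budget fits, and then neutralize the withheld edge afterwards --- if the faulty edge $f$ lands on $C_0$, take $uv=f$ so the splice deletes it; if the matching edge $xy$ misses $C_0$, reroute through $xy$ via neighbors of $x$ and $y$ on $C_0$ and a Hamiltonian path of $Q_{n-1}^{1}$ (Lemma \ref{pac} or \ref{twopa}); in the $|M|=2$ subcase, pull the projection $u_0v_0$ of a faulty edge of $F_1$ into the cycle of $Q_{n-1}^{0}$ so that Lemma \ref{cyc} applies on the other side. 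Your pointer to ``Cases 1 and 2 of Theorem \ref{main1}'' is also misdirected: there the obstruction is that $|M_0|$ exceeds $2(n-1)-1$, which never happens here since $|M|\leq 2n-4=2(n-1)-2$; the obstruction in Theorem \ref{main} is always the fault budget. So the proposal identifies the right architecture but leaves unproved (and in one place asserts falsely) precisely the case analysis that constitutes the proof.
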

\begin{proof} By Theorem \ref{main1}, the theorem is true if $F=\emptyset$.
Hence we only consider the case of $|F|\geq1$, this implies that
$|M|\leq2n-4$. When $|M|=1$, since $|F|\leq n-2$, the theorem is
true by Lemma \ref{cyc}. Moreover, we observe that, if
$|F|<n-1-\lceil\frac{|M|}{2}\rceil$, we may arbitrarily delete some
more edges (not in $M$) to make a faulty edge set $F^{'}\supseteq F$
and $|F^{'}|=n-1-\lceil\frac{|M|}{2}\rceil$. If our result holds for
$F^{'}$, it holds for $F$. So it suffices to consider $2\leq|M|\leq
2n-4$ and $|F|=n-1-\lceil\frac{|M|}{2}\rceil$.

We prove Theorem \ref{main} by induction on $n$. By Lemma
\ref{base}, Theorem \ref{main} holds for $n=4$. Suppose Theorem
\ref{main} holds for $n-1(\geq4)$.

Since $|M|+|F|\leq2n-3$, there exists $j\in[n]$ such that $|(M\cup
F)\cap E_{j}|\leq1$. Suppose $Q_{n}$ is decomposed into two
$(n-1)$-dimensional subcubes $Q_{n-1}^{0}$ and $Q_{n-1}^{1}$ by
$E_{j}$. Moreover, in the case when $n=5$, the choice can be made in
such a way that none of $(Q_{4}^{0},M_{0},F_{0})$ and
$(Q_{4}^{1},M_{1},F_{1})$ is the case $(a)$ on Figure 6 by Lemma
\ref{decom}. By symmetry, we may assume that $|M_{0}|\geq|M_{1}|$,
and when $|M_{0}|=|M_{1}|$, suppose $|F_{0}|\geq|F_{1}|$. Since
$2\leq|M|\leq2n-4$, we have $1\leq|M_{0}|\leq2(n-1)-2$.

Case 1. $(M\cup F)\cap E_{j}=\emptyset$

Subcase 1.1. $|M_{0}|\leq|M|-2$. Note that then $2\leq|M_{1}|\leq
n-2$.

Since $|F_{0}|\leq |F|\leq(n-1)-1-\lceil\frac{|M_{0}|}{2}\rceil$, by
the induction hypothesis, there exists a Hamiltonian cycle $C_{0}$
containing $M_{0}$ in $Q_{n-1}^{0}-F_{0}$. Since
$|E(C_{0})|-(|M_{0}|+|M_{1}|)\geq2^{n-1}-(2n-4)\geq1$, there exists
an edge $uv\in E(C_{0})\setminus M_{0}$ such that $u_{1}v_{1}\notin
M_{1}$, then $M_{1}\cup\{u_{1}v_{1}\}$ is a linear forest with
$1\leq|M_{1}\cup \{u_{1}v_{1}\}|\leq n-2+1\leq2(n-1)-3$. If
$|M_{0}|\geq|M_{1}|+2$, then $|M_{0}|\geq4$, so $|F_{1}|\leq
|F|=n-1-\lceil\frac{|M_{0}|+|M_{1}|}{2}\rceil\leq(n-1)-2
-\lceil\frac{|M_{1}|}{2}\rceil=
(n-1)-2-\lfloor\frac{|M_{1}|+1}{2}\rfloor$. Similarly, when
$|M_{0}|=|M_{1}|$ or $|M_{0}|=|M_{1}|+1$, we can also obtain that
$|F_{1}|\leq(n-1)-2-\lfloor\frac{|M_{1}|+1}{2}\rfloor$. By Lemma
\ref{cycle}, there exists a Hamiltonian cycle $C_{1}$ containing
$M_{1}\cup\{u_{1}v_{1}\}$ in $Q_{n-1}^{1}-(F_{1}\setminus
\{u_{1}v_{1}\})$. Then the desired Hamiltonian cycle in $Q_{n}-F$ is
induced by edges of $(E(C_{0})\cup
E(C_{1})\cup\{uu_{1},vv_{1}\})\setminus\{uv,u_{1}v_{1}\}$.

Subcase 1.2. $|M_{0}|=|M|-1$. Then $|M_{0}|\leq2(n-1)-3$. Let
$M_{1}=\{wt\}$. We distinguish the following two possibilities.

Subcase 1.2.1. $F_{1}=\emptyset$. Let $xy\in M_{0}$. Since
$|F_{0}|\leq(n-1)-1- \lceil\frac{|M_{0}\setminus\{xy\}|}{2}\rceil$,
by the induction hypothesis, there exists a Hamiltonian cycle
$C_{0}$ containing $M_{0}\setminus\{xy\}$ in $Q_{n-1}^{0}-F_{0}$. If
$xy\in E(C_{0})$, let $uv\in E(C_{0})\setminus M_{0}$ such that
$u_{1}v_{1}\neq wt$, by Lemma \ref{pac}, there is a Hamiltonian path
$P_{u_{1}v_{1}}$ passing through edge $wt$ in $Q_{n-1}^{1}$. Then
the desired Hamiltonian cycle in $Q_{n}-F$ is induced by edges of
$(E(C_{0})\cup
E(P_{u_{1}v_{1}})\cup\{uu_{1},vv_{1}\})\setminus\{uv\}$. If
$xy\notin E(C_{0})$, let $u$ and $v$ be neighbors of $x$ and $y$ on
$C_{0}$ such that one of the paths between $x$ and $y$ contains $u$
and the other contains $v$ and $u_{1}v_{1}\neq wt$. By Lemma
\ref{pac}, there is a Hamiltonian path $P_{u_{1}v_{1}}$ passing
through edge $wt$ in $Q_{n-1}^{1}$. Then the desired Hamiltonian
cycle in $Q_{n}-F$ is induced by edges of $(E(C_{0})\cup
E(P_{u_{1}v_{1}})\cup\{xy,uu_{1},vv_{1}\})\setminus\{xu,yv\}$.

Subcase 1.2.2. $F_{1}\neq\emptyset$. Since $|F_{0}|\leq|F|-1\leq
(n-1)-1-\lceil\frac{|M_{0}|}{2}\rceil$, by the induction hypothesis,
there exists a Hamiltonian cycle $C_{0}$ containing $M_{0}$ in
$Q_{n-1}^{0}-F_{0}$. Since $|E(C_{0})\setminus(M_{0}\cup\{e\in
E(C_{0})\mid e_{1}$ is incident with $w$ or $t\})|\geq
2^{n-1}-(2n-5+4)\geq1$, there exists an edge $uv\in
E(C_{0})\setminus M_{0}$ such that $u_{1}v_{1}\neq wt$ and $\{wt,
u_{1}v_{1}\}$ is a matching of $Q_{n-1}^{1}$. Similar to Subcase
1.1, we can check that $|F_{1}|\leq n-2-\lceil\frac{2}{2}\rceil$. By
the induction hypothesis, there exists a Hamiltonian cycle $C_{1}$
containing $\{wt,u_{1}v_{1}\}$ in
$Q_{n-1}^{1}-(F_{1}\setminus\{u_{1}v_{1}\})$. Hence the desired
Hamiltonian cycle in $Q_{n}-F$ is induced by edges of $(E(C_{0})\cup
E(C_{1})\cup\{uu_{1},vv_{1}\})\setminus\{uv,u_{1}v_{1}\}$.

Subcase 1.3. $M_{0}=M$. Recall that $|M_{0}|=|M|\leq2(n-1)-2$.

If $F_{0}\neq\emptyset$, let $f\in F_{0}$, since $|F_{0}\setminus
\{f\}|\leq n-2-\lceil\frac{|M_{0}|}{2}\rceil$, by the induction
hypothesis, there exists a Hamiltonian cycle $C_{0}$ containing
$M_{0}$ in $Q_{n-1}^{0}-(F_{0}\setminus\{f\})$. If $f\notin
E(C_{0})$, let $uv$ be any edge in $E(C_{0})\setminus M_{0}$; if
$f\in E(C_{0})$, let $uv=f$. If $F_{0}=\emptyset$ and $|M|\geq3$,
since $|M_{0}|\leq2(n-1)-2$, by the induction hypothesis, there
exists a Hamiltonian cycle $C_{0}$ containing $M_{0}$ in
$Q_{n-1}^{0}$. Let $uv$ be any edge in $E(C_{0})\setminus M_{0}$.

For the above two cases, we can verify that $|F_{1}|\leq(n-1)-2$ and
therefore, by Lemma \ref{cyc}, $u_{1}v_{1}$ lies on a Hamiltonian
cycle $C_{1}$ in $Q_{n-1}^{1}-(F_{1}\setminus\{u_{1}v_{1}\})$. Then
the desired Hamiltonian cycle in $Q_{n}-F$ is induced by edges of
$(E(C_{0})\cup
E(C_{1})\cup\{uu_{1},vv_{1}\})\setminus\{uv,u_{1}v_{1}\}$.

If $F_{0}=\emptyset$ and $|M|=2$, since $|F_{1}|=n-2\geq3$, let
$uv\in F_{1}$ such that $u_{0}v_{0}\notin M$. Since
$|F_{1}\setminus\{uv\}|=(n-1)-2$, by Lemma \ref{cyc}, $uv$ lies on a
Hamiltonian cycle $C_{1}$ in $Q_{n-1}^{1}-(F_{1}\setminus\{uv\})$.
Since $M_{0}\cup\{u_{0}v_{0}\}$ is a linear forest of size 3, by
Lemma \ref{forest}, there exists a Hamiltonian cycle $C_{0}$
containing $M_{0}\cup\{u_{0}v_{0}\}$ in $Q_{n-1}^{0}$. Then the
desired Hamiltonian cycle in $Q_{n}-F$ is induced by edges of
$(E(C_{0})\cup
E(C_{1})\cup\{u_{0}u,v_{0}v\})\setminus\{u_{0}v_{0},uv\}$.

Case 2. $E_{j}\cap M=\emptyset$ and $E_{j}\cap F=\{xx_{1}\}$

Since $|F_{0}|\leq n-2-\lceil\frac{|M_{0}|}{2}\rceil$, by the
induction hypothesis, there exists a Hamiltonian cycle $C_{0}$
containing $M_{0}$ in $Q_{n-1}^{0}-F_{0}$. Since $|E(C_{0})\setminus
(M_{0}\cup\{e\in E(C_{0})\mid e_{1}\in M_{1}$ or $e$ is incident
with $x\})|\geq2^{n-1}-(2n-4+2)\geq1$, there exists an edge $uv\in
E(C_{0})\setminus M_{0}$ such that $xx_{1}\notin\{uu_{1},vv_{1}\}$
and $u_{1}v_{1}\notin M_{1}$, then $M_{1}\cup\{u_{1}v_{1}\}$ is a
linear forest with $1\leq|M_{1}\cup\{u_{1}v_{1}\}|\leq2(n-1)-3$. If
$|M_{0}|\geq2$, then $|F_{1}|\leq
n-2-(\lceil\frac{|M_{1}|}{2}\rceil+1)=
(n-1)-2-\lfloor\frac{|M_{1}|+1}{2}\rfloor$. Similarly, when
$|M_{0}|=1$, $|F_{1}|\leq(n-1)-2-\lfloor\frac{|M_{1}|+1}{2}\rfloor$.
By Lemma \ref{cycle}, there exists a Hamiltonian cycle $C_{1}$
containing $M_{1}\cup\{u_{1}v_{1}\}$ in $Q_{n-1}^{1}-(F_{1}\setminus
\{u_{1}v_{1}\})$. Then the desired Hamiltonian cycle in $Q_{n}-F$ is
induced by edges of $(E(C_{0})\cup E(C_{1})\cup
\{uu_{1},vv_{1}\})\setminus\{uv,u_{1}v_{1}\}$.

Case 3. $E_{j}\cap F=\emptyset$ and $E_{j}\cap M=\{uu_{1}\}$

When $|M|=2$ or $3$, since $|M\cup F|=n-1+\lfloor
\frac{|M|}{2}\rfloor=n$, we can select $j_{0}\in[n]$ such that
$E_{j_{0}}\cap(M\cup F)=\emptyset$ or $E_{j_{0}}\cap M=\emptyset,
|E_{j_{0}}\cap F|=1$, and therefore these two situations can be
reduced to Case 1 or Case 2. So in what follows we can suppose that
$|M|\geq4$.

Subcase 3.1. $|M_{0}|=|M|-1$. We distinguish the following two possibilities.

Subcase 3.1.1. $F_{1}=\emptyset$. Let $f\in F_{0}$. Since
$|F_{0}\setminus\{f\}|\leq n-2-\lceil\frac{|M_{0}|}{2}\rceil$, by
the induction hypothesis, there exists a Hamiltonian cycle $C_{0}$
containing $M_{0}$ in $Q_{n-1}^{0}-(F_{0}\setminus\{f\})$. If
$f\notin E(C_{0})$, let $uv\in E(C_{0})\setminus M_{0}$; if $f\in
E(C_{0})$ and $u$ is incident with $f$, let $uv=f$. By Lemma
\ref{Havel}, there exists a Hamiltonian path $P_{u_{1}v_{1}}$ in
$Q_{n-1}^{1}$. Then the desired Hamiltonian cycle in $Q_{n}-F$ is
induced by edges of $(E(C_{0})\cup
E(P_{u_{1}v_{1}})\cup\{uu_{1},vv_{1}\})\setminus\{uv\}$. If $f\in
E(C_{0})$ and $u$ is not incident with $f$, let $f=xy$ and $uv\in
E(C_{0})\setminus M_{0}$ such that $v\notin\{x,y\}$. Since
$d(u_{1},v_{1})=d(x_{1},y_{1})=1$, by Lemma \ref{twopa}, there exist
spanning paths $P_{u_{1}v_{1}},P_{x_{1}y_{1}}$ of $Q_{n-1}^{1}$.
Then the desired Hamiltonian cycle in $Q_{n}-F$ is induced by edges
of $(E(C_{0})\cup E(P_{u_{1}v_{1}})\cup E(P_{x_{1}y_{1}})$
$\cup\{uu_{1},vv_{1},xx_{1},yy_{1}\})\setminus\{uv,f\}$.

Subcase 3.1.2. $F_{1}\neq\emptyset$. Since $|F_{0}|\leq
n-2-\lceil\frac{|M_{0}|}{2}\rceil$, by the induction hypothesis,
there exists a Hamiltonian cycle $C_{0}$ containing $M_{0}$ in
$Q_{n-1}^{0}-F_{0}$. Let $uv\in E(C_{0})\setminus M_{0}$. Since
$|M|\geq4$, we have $|F_{1}|\leq(n-1)-2$. By Lemma \ref{cyc},
$u_{1}v_{1}$ lies on a Hamiltonian cycle $C_{1}$ in
$Q_{n-1}^{1}-(F_{1}\setminus\{u_{1}v_{1}\})$. Then the desired
Hamiltonian cycle in $Q_{n}-F$ is induced by edges of $(E(C_{0})\cup
E(C_{1})\cup\{uu_{1},vv_{1}\})\setminus\{uv,u_{1}v_{1}\}$.

Subcase 3.2. $|M_{0}|\leq|M|-2$. Note that $|M_{0}|\geq2$.

If $F_{0}\neq\emptyset$, since $|F_{0}|\leq
n-2-\lceil\frac{|M_{0}|}{2}\rceil$, by the induction hypothesis,
there exists a Hamiltonian cycle $C_{0}$ containing $M_{0}$ in
$Q_{n-1}^{0}-F_{0}$. Let $v$ be a neighbor of $u$ on $C_{0}$, then
$M_{1}\cup\{u_{1}v_{1}\}$ is a linear forest with $1\leq|M_{1}\cup
\{u_{1}v_{1}\}|\leq2(n-1)-3$. Since $|F_{1}|\leq
|F|-1\leq(n-1)-2-\lfloor\frac{|M_{1}|+1}{2}\rfloor$, by Lemma
\ref{cycle}, there exists a Hamiltonian cycle $C_{1}$ containing
$M_{1}\cup\{u_{1}v_{1}\}$ in $Q_{n-1}^{1}-(F_{1}\setminus
\{u_{1}v_{1}\})$. Then the desired Hamiltonian cycle in $Q_{n}-F$ is
induced by edges of $(E(C_{0})\cup
E(C_{1})\cup\{uu_{1},vv_{1}\})\setminus\{uv,u_{1}v_{1}\}$.

If $F_{0}=\emptyset$, since $|F_{1}|\leq
n-2-\lceil\frac{|M_{1}|}{2}\rceil$, by the induction hypothesis,
there exists a Hamiltonian cycle $C_{1}$ containing $M_{1}$ in
$Q_{n-1}^{1}-F_{1}$. Let $w$ be a neighbor of $u_{1}$ on $C_{1}$,
then $M_{0}\cup\{uw_{0}\}$ is a linear forest of at most $2(n-1)-3$
edges. By Lemma \ref{forest}, there exists a Hamiltonian cycle
$C_{0}$ containing $M_{0}\cup\{uw_{0}\}$ in $Q_{n-1}^{0}$. Then the
desired Hamiltonian cycle in $Q_{n}-F$ is induced by edges of
$(E(C_{0})\cup
E(C_{1})\cup\{uu_{1},w_{0}w\})\setminus\{uw_{0},u_{1}w\}$.
\end{proof}

\vskip 0.2cm

\noindent{\large\bf Acknowledgements}

\vskip 0.2cm

\noindent The authors would like to express their gratitude to the
anonymous referees for their kind suggestions and careful
corrections on the original manuscript.


\end{document}